\DeclareMathAlphabet{\mathpzc}{OT1}{pzc}{m}{it}
\numberwithin{equation}{section}
\theoremstyle{plain}
\newtheorem{theorem}{Theorem}[section]
\newtheorem{corollary}[theorem]{Corollary}
\newtheorem{lemma}[theorem]{Lemma}
\theoremstyle{definition}
\newtheorem{definition}[theorem]{Definition}
\theoremstyle{remark}
\newcommand{\A}{\mathbb{A}}
\newcommand{\R}{\mathbb{R}}
\newcommand{\Q}{\mathbb{Q}}
\newcommand{\Z}{\mathbb{Z}}
\newcommand{\N}{\mathbb{N}}
\newcommand{\C}{\mathbb{C}}
\renewcommand{\H}{\mathbb{H}}
\newcommand{\leg}[2]{\left( \frac{#1}{#2} \right)}
\newcommand{\kzxz}[4]{\left(\begin{smallmatrix} #1 & #2 \\ #3 & #4\end{smallmatrix}\right) }
\newcommand{\im}{\operatorname{Im}}
\newcommand{\re}{\operatorname{Re}}
\newcommand{\odd}{\operatorname{oddity}}
\newcommand{\calD}{\mathcal{D}}
\newcommand{\calG}{\mathcal{G}}
\newcommand{\calH}{\mathcal{H}}
\newcommand{\calK}{\mathcal{K}}
\newcommand{\calM}{\mathcal{M}}
\newcommand{\calQ}{\mathcal{Q}}
\newcommand{\calS}{\mathcal{S}}
\newcommand{\calT}{\mathcal{T}}
\newcommand{\calZ}{\mathcal{Z}}
\newcommand{\frake}{\mathfrak e}
\newcommand{\bs}{\backslash}
\newcommand{\SL}{\operatorname{SL}}
\newcommand{\GL}{\operatorname{GL}}
\newcommand{\Mp}{\operatorname{Mp}}
\newcommand{\sig}{\operatorname{sig}}
\newcommand{\End}{\operatorname{End}}
\newcommand{\id}{\operatorname{id}}
\begin{document}

\title[Standard $L$-function of a vector valued modular form]{The standard $L$-function attached to a vector valued modular form}
\author{Oliver Stein}
\address{Fakult\"at f\"ur Informatik und Mathematik\\ Ostbayerische Technische Hochschule Regensburg\\Galgenbergstrasse 32\\93053 Regensburg\\Germany}
\email{oliver.stein@oth-regensburg.de}
%\thanks{}
\subjclass[2020]{11F27 11F25 11F41 11F66}

\begin{abstract}
We define two $L$-functions associated to a common vector valued eigenform  $f$ transforming with the ``finite'' Weil representation. The first one can be seen as a standard zeta function defined by the eigenvalues of $f$. The second one can be interpreted as standard $L$-function defined as an Euler product where each $p$-factor is a rational function in terms of two unramified characters of the $p$-adic field $\Q_p$. We show that both $L$-functions are related and prove further that they both can be continued meromorphically to the whole complex $s$-plane.  
%%%%%%%%%%%%%%%%%%%%%%%%%%%%%%%%%%%%%%%%%%%%%%%%%%%%%%%We prove a functional equation for a vector valued real analytic Eisenstein series transforming with the Weil representation of $\Sp(n,\Z)$ on $\C[(L'/L)^n]$. By relating such an Eisenstein series with  a real analytic Jacobi Eisenstein series of degree $n$ a functional equation for such an Eisenstein series is proved. Employing a doubling method for Jacobi forms of higher degree established by Arakawa we transfer the before-mentioned functional equation to a zeta function defined by the eigenvalues of a Jacobi eigenform. Finally, we obtain the analytic continuation and a functional equation of the standard $L$-function attached to a Jacobi eigenform, which was already proved by Murase, however in a different way. 
\end{abstract}

\maketitle

\section{Introduction}
Vector valued modular forms transforming with the Weil representation   play a prominent role in the theory of Borcherds products, see e. g. \cite{Bo} or \cite{Br1}: The weakly holomorphic forms of this type serve as input to the celebrated Borcherds lift, which maps them to meromorphic modular forms on orthogonal groups whose zeroes and poles are supported on special divisors and which possess an infinite product expansion. This lift has many important applications in geometry, algebra and in the theory of Lie algebras. Since their appearance in the works of Borcherds and Bruinier, a lot of research  regarding this type of modular forms has been done and most of the classical theory of modular forms has been established over the past years (see e. g. \cite{Br1}, \cite{BS}, \cite{Br2}, \cite{St1} or \cite{Mu1} and \cite{St3}).
However, still not much is known about associated Dirichlet series. In \cite{BS}, a zeta function of the form $\sum_{\substack{d\in \N\\(d,N)=1}}\lambda_f\kzxz{d^2}{0}{0}{1}d^{-s}$ was introduced. In \cite{St1} the analytic properties of the slightly different zeta function \eqref{eq:zeta_func_eigenvalues} were investigated. So far, to the best of my knowledge, there has been no standard $L$-function specified for this type of modular forms.
So, it is quite natural to define such an $L$-function and to study its analytic properties, which is the main goal of the present paper. 
This paper can be understood as a second part of \cite{St3}. Therein, we laid the foundations to define a standard $L$-function by developing a  theory of vector valued automorphic forms %associated to the Weil representation $\omega_f$
corresponding to  vector valued modular forms transforming according to the finite Weil representation.
This includes the study of the structure of a local vector valued spherical Hecke algebra  depending on the Weil representation $\omega_f$. All structural statements regarding this algebra assume that the local discriminant group is {\it anisotropic}. Consequently, the definition of our standard $L$-function relies also on this restriction. \newline
%These modular forms  play an important role  many recent papers. The weakly holomorphic forms of this type serve as input to a singular theta lift, which maps them to meromorphic modular forms on orthogonal groups whose zeroes and poles are supported on special divisors and which possess an infinite product expansion. This theta lift is the celebrated Borcherds lift (\cite{Bo}, \cite{Br1}), which has many applications in geometry, algebra and in the theory of Lie algebras.
%For instance, it is an interesting and widely studied problem to classify  reflective automorphic forms and thereby so-called reflective lattices and Kac-Moody algebras (see e. g. \cite{Sch1} and \cite{Wa}).
%  There are several ways to associate  an $L$-function to a modular form.
%In the present paper we focus on $L$-functions of such a vector valued automorphic form and thereby to the related vector valued  modular form. 
 In the present paper, two  types  of $L$-functions are defined. Both are associated to a common Hecke eigenform $f$. The first one is  defined in terms of the eigenvalues $\lambda_f(D)$ of a family of  Hecke operators $T(D)$ and is called standard zeta function in this paper. %, that is, $f\mid T(D) = \lambda_D(f)f$. It is given by
%\begin{equation}\label{eq:zeta_func_eigenvalues}
%D(s,f) = \sum_D\lambda_D(f)\det(D)^{-s}.
%\end{equation}
 The second one, the standard $L$-function, is constructed as an Euler product in terms of the Satake parameters $\alpha_{i,p}$ of $f$ (which are ultimately given by two unramified characters of $\Q_p$), where $p$ is a prime).  It can be written in the form $L(s,f) = \prod_{p<\infty}L_p(s,f)$ with
\begin{equation}\label{eq:local_L_func_rational_expr}
L_p(s,f) = R(\alpha_{i,p}, p^s),
\end{equation}
where $R(\alpha_{i,p},p^s)$ is a rational expression depending on the parameters $\alpha_{i,p}$ and the prime power $p^{s}$.
For both types of $L$-functions it is shown that they can be continued meromorphically to the whole complex $s$-plane. We also prove that both $L$-functions are related by an explicit equation. 

Note that some results of this paper  may already be covered in a paper of  Yamana (\cite{Ya}), who established analytic properties of a standard $L$-function associated to an automorphic representation of the metaplectic group $\Mp(2n)$. Yet, these results seem not to be immediately applicable to our setting and it is unclear how to recover the computations of the local Euler factors from them (which play an important role in applications - see the remarks below).

The  paper at hand is intended as a first step towards a more comprehensive study of $L$-functions associated to vector valued automorphic forms for the Weil representation:  Scheithauer (see \cite{Sch} for example) and others investigated extensively a lifting from scalar valued modular forms for $\Gamma_0(N)$, $N$ the level of the lattice $L$ (see below for more details), to vector valued modular forms transforming with the Weil representation, which commutes with Hecke operators on both sides. On the other hand, it is well known that there is a well established theory of automorphic forms  and automorphic representations of $\GL(2)$ connected to modular forms for $\Gamma_0(N)$ (see e. g. \cite{Ge}). It would be interesting to compare a vector valued automorphic form obtained from a lifted scalar valued modular form with the corresponding scalar valued automorphic form. I expect that there is a relation between the associated standard $L$-functions on both sides. This would hopefully allow one to place the standard $L$-functions of this paper within in the Langlands framework. %, which in turn could lead to a relation of the standard $L$-function in this paper and the one related  to some irreducible automorphic representation of $\GL(2)$.  
Another aspect of vector valued modular forms is their relation to Jacobi forms of lattice index (see \cite{Wa}, p. 2085, \cite{Br1}, Ex. 1.3 or \cite{BS}, Remark 4.11). I think it is worthwhile to investigate whether this relation carries over to the attached standard $L$-functions on both sides with the same goal in mind as before. 
I hope to come back these questions in the near future.\newline
Also, this paper (and \cite{St1})  can be used to prove   more general results on the injectivity of the Kudla-Millson lift along the lines of \cite{BF}. The corresponding proof relies on the computation of the local Euler factors of the introduced standard $L$-function.  In turn, a generalization of a converse theorem for lattices of level $p$ as stated in \cite{Br2} can be proven. This is indeed achieved in \cite{St2}. Although our results in the present paper are restricted to anisotropic discriminant forms, based on them, we have established one the most general converse theorems for the Borcherds lift (to the best of my knowledge).  

Let us describe the content of the paper in more detail. To this end, let $(L,(\cdot,\cdot))$ be an even lattice of even rank $m$ and type $(b^+, b^-)$ with (even) signature $\sig(L)=b^+-b^-$ and level $N$. Associated to the bilinear form $(\cdot,\cdot)$ there is a quadratic form $q$. The modulo 1 reduction of $(\cdot, \cdot)$ and $q$  defines a bilinear form and quadratic form, respectively, on the discriminant form $D=L'/L$. Here $L'$ is the dual lattice of $L$.
The Weil representation $\rho_L$ is a representation of $\Gamma = \SL_2(\Z)$ on the group ring $\C[D]$.
In the first part of the paper a certain standard zeta function $\calZ(s,f)$  attached to common eigenform $f$ is studied. Its  local $p$-part is given by the eigenvalues of the Hecke operators $T\kzxz{p^{-k}}{0}{0}{p^{-l}}$ with $0\le k\le l$ and $k+l\in 2\Z$:
\[
\calZ_p(s,f)=\sum_{(k,l)\in \Lambda_+}\lambda_{f}\kzxz{p^{-k}}{0}{0}{p^{-l}}p^{-s(k+l)},
\]
where $\Lambda_+$ is specified in \eqref{eq:lambda_+}.
A connection to the local standard zeta function in \cite{St1} is then proved and thereby  analytic properties of $\calZ(s,f)$ can be deduced from those of the zeta function in \cite{St1}.

The second part of this article deals with definition of the standard $L$-function of an eigenform $f$ and subsequently investigates its analytic properties. To sketch the central ideas, we briefly summarize some facts and notation from \cite{St3}: 
As usual, denote with  $\Z_p$ the ring of $p$-adic integers and let $\widehat{\Z}$ be  $\prod_{p<\infty}\Z_p$. Moreover, by $\omega_f=\bigotimes_{p<\infty}\omega_p$ we mean the Weil representation of $\SL_2(\widehat{\Z})$  on a space $S_L$ (isomorphic to $\C[D]$). In \cite{St3}, Section 3, it is explained how $\omega_f$ can be extended to some subgroup of $\GL_2(\A_f)$, where $\A_f$ denotes the finite adeles. Let $\calQ_p$ be subgroup of $\GL_2(\Q_p)$ and $\calK_p$ a subgroup of $\GL_2(\Z_p)$. In terms of these groups and the local Weil representation $\omega_p$ one can define a vector valued spherical Hecke algebra $\calH(\calQ_p//\calK_p,\omega_p)$.  In \cite{St3} its structure is determined and a set of generators  $\{T_{k,l}\; |\; (k,l)\in \Lambda_+\}$ is specified. Similar to other types of automorphic forms, there is an action of $\calH(\calQ_p//\calK_p,\omega_p)$  on the space of vector valued automorphic forms based on convolution. Any eigenform $F$ of all $T_{k,l}$ with eigenvalues $\lambda_{F,p}(T_{k,l})$ defines a $\C$-algebra homomorphism, which in turn determines a pair of unramified characters $(\chi_{F,p}^{(1)}, \chi_{F,p}^{(2)})$. \newline
A common way described in the literature (see for instance \cite{Bo2}, \cite{BM}, \cite{BoSP} or \cite{Sh1}) to obtain \eqref{eq:local_L_func_rational_expr}, is to factorize the Hecke series
\[
\sum_{(k,l)\in \Lambda_+}\lambda_{F,p}(T_{k,l})p^{-s(k+l)}. 
\]
We follow Arakawa \cite{Ar}, Section 5, to establish this factorization by means of an integral representation of the Hecke series. On the one hand, we have up to some constant
\[
\sum_{(k,l)\in \Lambda_+}\lambda_{F,p}(T_{k,l})p^{-s(k+l)} = \int_{\calQ_p}\langle \nu_s(g),\varphi_p^{(0)}\rangle\langle\phi_\chi(g),\varphi_p^{(0)}\rangle dg
\]
On the other hand, the integral on the right-hand side of this equation can be evaluated resulting in the identity
\[
\int_{\calQ_p}\langle \nu_s(g),\varphi_p^{(0)}\rangle\langle\phi_\chi(g),\varphi_p^{(0)}\rangle dg = \frac{1+\chi_1(p)\chi_2(p)p^{-2s}}{(1-\chi_1(p^2)p^{-2s})(1-\chi_2(p^2)p^{-2s})}.
\]
Here $\chi$ denotes a couple of unramified characters $(\chi_1,\chi_2)$ of $\Q_p$. The remaining notation for the two formulas above can be found in Section \ref{subsec:standard_L_func}.
  As indicated above, the right-hand side of the last  identity gives rise to our definition of the local standard $L$-function $L_p(s,F)$.

  The last part of the paper deals with the analytic properties of $L(s,F)$. Following \cite{Ar}, Section 5, again, we find a direct relation between the standard $L$-function and the zeta function $\calZ(s,f)$.% similar to the one in \eqref{eq:zeta_func_eigenvalues}.  %In \cite{St1} a variant of the before mentioned zeta function was studied. It turns out that it is a meromorphic function  and satisfies a functional equation.
  The property of being meromorphic carries over to $L(s,F)$. As $\calZ(s,f)$ does not satisfy a functional equation, it is unclear whether $L(s,F)$ obeys a functional equation.  %The local $L$-function $L_p(s,F)$ possesses an integral representation of the form

\section{Notation}\label{sec:notation}
We adopt the notation in \cite{St3}. Nevertheless, for convenience of the reader, we introduce the most common symbols in the paper. As usual, we let $e(z)$, $z\in \C$, be the abbreviation for $e^{2\pi i z}$. For any prime $p\in \Z$ by $\Q_p$ we mean the field of $p$-adic numbers and by $\Z_p$ its ring of $p$-adic integers. % $|\cdot|_p$ is the $p$-adic absolute value and $\ord_p(\cdot)$ the $p$-adic valuation of $\Q_p$. We write $\A$ for the adele ring and for $\A^\times$ for the idele group. By $\A_f$ we mean the set of finite adeles.  %Then $\iota_p(x_p)$ means the embedding of $x_p$ into $\A$ in the following way
%\[
%\iota_p(x_p) = (\alpha_q)
%\]
%For any ring $R$,  as usual, $M_{2,2}(R)$ and $\GL_2(R)$ are the set of $2\times 2$ matrices, the subgroup of invertible matrices in $M_{2,2}(R)$. The following subgroups are important for  this paper:
%\begin{align*}
%  &\calG(R) = \{M\in GL_2(R)\; |\; \det(M)\in (R^\times)^2\},\\
%  &D(R) = \{ \kzxz{r}{0}{0}{r^{-1}}\; |\; r\in R^\times\},\\
%  &M(R) = \{ \kzxz{r_1}{0}{0}{r_2}\; |\; r_1,r_2\in R^\times\},\\
%  &N(R) = \{ \kzxz{1}{r}{0}{1}\; |\; r\in R\},\\
%  &U(R) = \{ \kzxz{1}{0}{r}{1}\, |\; r\in R\} \text{ and }
%\end{align*}

The groups listed below will appear frequently in this paper.
\begin{equation}\label{def:p_adic_subgroups}
  \begin{split}
    & N(\Q_p) = \{ \kzxz{1}{r}{0}{1}\; |\; r\in \Q_p\} \text{ and } N(\Z_p) \text{ accordingly},\\ 
    &\calQ_p=\{M\in \GL_2(\Q_p)\; |\; \det(M)\in (\Q_p^\times)^2\}, \\
    & \calK_p=\{M\in \GL_2(\Z_p)\; |\; \det(M)\in (\Z_p^\times)^2\}, \\
    &\calM_p=\{M\in \kzxz{r_1}{0}{0}{r_2}\in \GL_2(\Q_p)\; |\; \det(M)\in (\Q_p^\times)^2\}, \\
    &\calD_p=\calM_p\cap \calK_p  \\
%      & \calN_p=\{\kzxz{r_1}{0}{0}{r_2}\in M(\Z_p)\; |\; r_1\in (\Z_p^\times)^2, r_2=1\}.
    \end{split}
\end{equation}
 Moreover, the subgroup 
  \begin{equation}\label{eq:K_0}
    \calK_0(p)=\left\{\kzxz{a}{b}{c}{d}\in \calK_p\; |\; c\equiv 0\bmod{p}\right\}
  \end{equation}
  of $\calK_p$ will be relevant.
Throughout the paper we use the following abbreviations for certain elements of these groups
\begin{align*}
  n\_(c) = \kzxz{1}{0}{c}{1}, \quad n(b) = \kzxz{1}{b}{0}{1}, \quad m(s) = \kzxz{s}{0}{0}{s^{-1}}, \quad m(t_1,t_2) = \kzxz{t_1}{0}{0}{t_2}\text{ and } w=\kzxz{0}{1}{-1}{0}. 
\end{align*}
  Since $\calQ_p$ is locally compact (see \cite{St3}, Lemma 4.3), we may fix a Haar measure on $\calQ_p$ such that $\int_{G\cap \calK_p} dg = 1$ for any of the groups $G=\calQ_p, \calM_p$ and $N(\Q_p)$. We denote with $\mu(K)$ the measure of any subgroup $K$ of $\calK_p$. 
  
Moreover, we will make frequently use of the following subsets of $\Z^2$:
\begin{equation}\label{eq:lambda_+}
  \begin{split}
    &\Lambda = \{(k,l)\in \Z^2\; |\; k,l\ge 0  \text{ and } k+l\in 2\Z\} \text{ and }\\
    & \Lambda_+ = \{(k,l)\in \Lambda\; |\; k \le l \}.
    \end{split}
  \end{equation}
Finally, as usual, we write $\H=\{\tau \in \C\; |\; \im(\tau) > 0\}$ for the complex upper half plane and $\leg{\cdot}{d}$ for the Legendre symbol.  

\section{Review of \cite{St3}}
In this section we would like to present some results of \cite{St3} which we will be relevant for the present paper. This will hopefully improve the readability.
\subsection{Local Weil representation $\omega_p$, \cite{St3}, Sect. 3}\label{subsec:local_weil}
Let $(L, (\cdot,\cdot))$ be an even, non-degenerated lattice of signature $b^+-b^-$ and even rank $b^++b^-$. Associated to $L$ is dual lattice $L'$ and the quadratic module $D=L'/L$. Moreover, put $V=L\otimes \Q$. Let $p$ be an odd  prime dividing $|D|$. Then $D_p$ means the $p$-group of $D$ and $L_p=L\otimes \Z_p$ is the corresponding $p$-adic lattice. Note that  $D_p$ is isomorphic to the group $L_p'/L_p$.  By $\omega_p$ we mean the ``$p$''-part of the adelic  Weil representation $\omega_f$ of $\SL_2(\A)\times O(V)(\A)$, where $\A$ is the ring of the adeles. In Section 3 of \cite{St3} the extension of the local Weil representation $\omega_p$ from $\SL_2(\Z_p)$  to the group $\calK_p$ is specified. Since several formulas later in the present paper depend on the explicit evaluation of $\omega_p$ on $\calK_p$, we briefly recapitulate the relevant facts on $\omega_p$. The representation space of $\omega_p$ is $S_{L_p}=\bigoplus_{\mu_p\in D_p}\C\varphi_p^{(\mu_p)}$, where $\varphi_p^{(\mu_p)}$ is a Schwartz-Bruhat function given by $\mathbbm{1}_{\mu_p+L_p}$, the characteristic function of the coset $\mu_p+L_p$ in $L_p'/L_p$. %Note that it can be shown that the $p$-group $D_p$ of $D$ is isomorphic to $L_p'/L_p$.
Let
\[
\psi_p:\Q_p/\Z_p\rightarrow \C^\times, x_p\mapsto \psi_p(x_p) = e(x_p')
\]
the conjugate of the ``standard additive'' character of $\Q_p$, where   $x_p'\in \Q/\Z$ is the principal part of $x_p$. Associated to $\psi_p$, in the Schr\"odinger model, $\omega_p$ is defined on the generators of $\SL_2(\Z_p)$ as follows:
\begin{equation}\label{eq:weil_rep_explicit}
  \begin{split}
    &    \omega_p(n(b))\varphi_p^{(\mu)} = \psi_p(bq(\mu))\varphi_p^{(\mu)} \\
    & \omega_p(w)\varphi_p^{(\mu)} = \frac{\gamma_p(D_p)}{|D_p|^{1/2}}\sum_{\nu_p\in D_p}\psi_p((\mu_p,\nu_p))\varphi_p^{(\nu_p)}\\
    & \omega_p(m(a))\varphi_p^{(\mu_p)} = \chi_{V,p}(a)\varphi_p^{(a^{-1}\mu_p)},
    \end{split}
\end{equation}
where $\gamma(D_p)$ is the local Weil index and  $\chi_{V,p}(a) = (a, (-1)^{m/2}|D_p|)_p = \leg{a}{|D_p|}$ is the local Hilbert symbol. 
We also repeat the action of diagonal matrices $m(t_1,t_2)\in \calK_p$  in the Weil representation $\omega_p$. We have 
\begin{equation}
%  \begin{split}
    \omega_p(m(t_1,t_2))\varphi_p^{(\mu_p)} =\leg{t_1}{|D_p|}\varphi_p^{(t^{-1}t_2\mu_p)},
  \end{equation}
  where $\det(m(t_1,t_2)) = t^2\in \Z_p^\times$.
  If $p$ is coprime to $|D|$, then $L_p'/L_p = 0+L_p$ and $\omega_p$ is trivial.
  
  \subsection{The Hecke algebras $\calH(\Q_p//\calK_p, \omega_p)$ (\cite{St3}, Sect. 4}
  Let $p$ be prime  and  $\calH(\calQ_p//\calK_p, \omega_p)$ be set of maps $F:\calQ_p\rightarrow \End(S_{L_p})$ satisfying
  \begin{enumerate}
  \item[i)]
    $F$ is supported on finitely many double cosets $\calK_pg\calK_p, \; g\in \calQ_p$.
  \item[ii)]
    $F(k_1gk_2) = \omega_p(k_1)\circ F(g)\circ \omega_p(k_2)$ for all $k_1,k_2\in \calK_p$ and $g\in \calQ_p$.
  \end{enumerate}
  $\calH(\calQ_p//\calK_p,\omega_p)$ becomes with respect to convolution an associative $\C$-algebra. In \cite{St3} a set of generators of $\calH(\calQ_p//\calK_p,\omega_p)$ is given. For a prime $p$ dividing $|D|$ a subalgebra $\calH^+(\calQ_p//\calK_p,\omega_p)$ is considered. Under the assumption that $D_p$ is an anisotropic quadratic module, a set of generators of $\calH^+(\calQ_p//\calK_p,\omega_p)$ is given by
  \begin{enumerate}
    \item
      \begin{equation}\label{def:T_k_l}
      T_{k,l}:S_{L_p}\rightarrow S_{L_p},\quad \varphi_p^{(\mu_p)}\mapsto \varphi_p^{p^{\frac{1}{2}(l-k)}\mu_p} = \varphi_p^{(0)}
      \end{equation}
      for $(k,l)\in \Lambda_+$ with $k<l$, where $T_{k,l}$ is supported on $\calK_pm(p^k,p^l)\calK_p$.
    \item
      \begin{equation}\label{def:T_k}
        T_k: S_{L_p}\rightarrow S_{L_p},\quad \varphi_p^{(\mu_p)}\mapsto \varphi_p^{(\mu_p)}
      \end{equation}
      for $(k,l)\in \Lambda_+$ with $k=l$, where $T_k$ is supported on $\calK_pm(p^k,p^k)\calK_p$.
  \end{enumerate}
  For $(p,|D|)=1$ the algebra $\calH(\calQ_p//\calK_p, \omega_p)$ is much simpler and well known. A set of generators given by
  \begin{equation}\label{def:T_k_l_coprime}
    \{\mathbbm{1}_{\calK_pm(p^k,p^l)\calK_p}\id_{S_{L_p}}\;|\; (k,l)\in \Lambda_+\},
  \end{equation}
  where $\mathbbm{1}_{\calK_pm(p^k,p^l)\calK_p}$ is the characteristic function of $\calK_pm(p^k,p^l)\calK_p$.
  The so called spherical map or Satake map is a means to get a better  understanding of  the structure of $\calH^+(\calQ_p//\calK_p,\omega_p)$ and $\calH(\calQ_p//\calK_p,\omega_p)$.
  If $p$ divides $|D|$, we consider a variant of the classical Satake map:
  \begin{equation}\label{def:satake_map}
  \begin{split}
    & \calS: \calH(\calQ_p//\calK_p, \omega_p) \rightarrow \calH(\calM_p//\calD_p,\omega_p{{{_{|S_{L_p}^{N(\Z_p)}}}}} ), \\
  &  T\mapsto \left(m\mapsto \delta(m)^{1/2}\sum_{n\in N(\Q_p)/N(\Z_p)}T(mn)_{|S_{L_p}^{N(\Z_p)}}\right).
  \end{split}
  \end{equation}
  Restricted to $\calH^+(\calQ_p//\calK_p,\omega_p)$, the map $\calS$ defines an isomorphism between $\calH^+(\calQ_p//\calK_p,\omega_p)$ and $\calH^W(\calM_p//\calD_p, \omega_{p_{|S_{L_p}^{N(\Z_p)}}})$. 
  In the case of $(p,|D|)=1$, the composition of $T\mapsto \langle T,\varphi_p^{(0)}\rangle$ with the classical Satake map $S$
  \begin{equation}\label{def:satake_map_coprime}
     m\mapsto  \left(m\mapsto \delta(m)^{1/2}\int_{N(\Q_p)}f(mn)dn\right)
  \end{equation}
  gives an isomorphism  from $\calH(\Q_p//\calK_p,\omega_p)$ to $\calH(\calM_p//\calD_p)^W$.  

  \subsection{Vector valued automorphic forms and the action of $\calH(\calQ_p//\calK_p,\omega_p)$, Sect. 5}
  Let $f\in S_\kappa(\rho_L)$ be a cusp form (for the definition of $S_\kappa(\rho_L)$ see the next section). We associate to $f$  a  vector valued automorphic form by
  \[
  F_f: \calG(\Q)\setminus \calG(\A)\rightarrow S_L, \quad g\mapsto F_f((g)= \omega_f(k)^{-1}j(g_\infty,i)^{-\kappa}f(g_\infty i),
  \]
  where $g=\gamma(g_\infty \times k)$ describes the decomposition of $g$ with respect to the strong approximation theorem and the space $S_L$ is  defined as $S_{L_p}$ in Subsection \ref{subsec:local_weil}.
  The mapping $f\mapsto F_f$ is in fact an isomorphism from $S_\kappa(\rho_L)$ to the space $A_\kappa(\omega_f)$ of automorphic forms, which is defined similar to the classical space of elliptic automorphic forms. This isomorphism is denoted with $\mathscr{A}$.
  There is an action of the Hecke algebras $\calH^+(\calQ_p//\calK_p,\omega_p)$  (or $\calH(\calQ_p//\calK_p,\omega_p)$ if $(p,|D|)=1$) by convolution (analogous to the case of scalar valued spherical Hecke algebras) on automorphic forms:
  Let $F(g) = \bigotimes_{p<\infty} F_p(g)\in A_\kappa(\omega_f)$ and $T_p\in \calH^+(\calQ_p//\calK_p,\omega_p)$ (or $\calH(\calQ_p//\calK_p,\omega_p)$ if $(p,|D|)=1$)). Then we define 
   \begin{equation}\label{eq:global_adelic_hecke_op} 
\calT^{T_p}:A_{\kappa}(\omega_f)\rightarrow A_{\kappa}(\omega_f),\quad \calT^{T_p}(F)(g)=\sum_{x_p\in \calQ_p/\calK_p}R^{T_p}(\iota_p(x_p))F(g\iota_p(x_p)),
   \end{equation}
   where
   \begin{equation}\label{eq:shift_op}
    R^{T_p}(h) \text{ is the operator } \quad F\mapsto R^{T_p}(h)F= \bigotimes_{q<\infty}R_q^{T_p}(h_q)F_q 
  \end{equation}
  with
  \begin{equation}\label{eq:local_hecke_op}
    R_q^{T_p}(h_q)F_q(g_q) = 
    \begin{cases}
      F_q(g_\infty, g_qh_q), & q\not=p\\
      T_p(h_p)(F_p(g_\infty,g_p)), & q= p.
\end{cases}
  \end{equation}
  The action of these Hecke algebras via \eqref{eq:global_adelic_hecke_op} is compatible with that of Hecke operators on the space $S_\kappa(\rho_L)$. In fact, it can be proved that
  \begin{equation}\label{eq;compatibility_notcoprime}
    \calT^{T_{k,l}}(F_f) = F_{p^{(k+l)(\frac{\kappa}{2}-1)}T(m(p^{-k},p^{-l}))f}
  \end{equation}
 for all $(k,l)\in \Lambda_+$, which is an analogue of the classical result for scalar valued modular forms. Based on \eqref{eq;compatibility_notcoprime}, it is worthwhile to observe that a common eigenform $f\in S_\kappa(\rho_L)$  of the Hecke operators $T(m(p^{-k},p^{-l}))$ for all $(k,l)\in \Lambda_+$ determines a via its eigenvalues a $\C$-algebra homomorphism of $\calH^+(\Q_p//\calK_p,\omega_p)$ (or $\calH(\Q_p//\calK_p,\omega_p)$) (see \cite{St3}, Remark 5.10. for more details).  
  
\section{Hecke operators $T(m(p^k,p^l))$ and standard zeta-functions}\label{subsec:hecke_operators_vec_val}
In this section we briefly summarize some facts on lattices, discriminant forms, Gauss sums  and the ``finite'' Weil representation. We also recall the definition of vector valued modular forms for the Weil representation and some related theory relevant for the present paper. Subsequently, we explain how to extend the definition of the Hecke operator $T(p^{2l})^*$ in Definition 5.5 of \cite{BS} to the action of double cosets of the form $\Gamma m(p^k,p^l)\Gamma$. In \cite{BS} a standard zeta function associated to an eigenform of all Hecke operators $T(m(n^2,1)),\; n\in \N$ is introduced. Here, we consider a variant of this zeta function and state some analytic properties based on the results in \cite{St1}.

Let $L$ be a lattice of rank $m$  equipped with a symmetric $\Z$-valued bilinear form $(\cdot,\cdot)$ such that the associated quadratic form
\[
q(x):=\frac{1}{2}(x,x)
\]
takes values in $\Z$ for $ x\in L$. We assume that $m$ is even, $L$ is non-degenerate and denote its type by $(b^+,b^-)$ and its signature $b^+-b^-$ by $\sig(L)$. Note that $\sig(L)$ is also even. We stick with these assumptions on $L$ for the rest of this paper unless we state it otherwise. Further, let $L'$
%\[
%L':=\{x\in V=L\otimes \Q\; :\; (x,y)\in\Z\quad \text{ for all } \; y\in L\}
%\]
be the dual lattice of $L$. 
Since $L\subset L'$, the elementary divisor theorem implies that $L'/L$ is a finite group. We denote  this group by $D$ and for any prime $p$ dividing $|D|$ by $D_p$ its $p$-group. The modulo 1 reduction of both, the bilinear form $(\cdot, \cdot)$ and the associated quadratic form, defines a $\Q/\Z$-valued bilinear form $(\cdot,\cdot)$ with corresponding $\Q/\Z$-valued quadratic form on $D$. We call $D$ combined with $(\cdot,\cdot)$ a discriminant form or a quadratic module. We call it anisotropic, if $q(\mu) = 0$ holds only for $\mu=0$. Further, we denote by $N$ the level of the lattice $L$. It is the smallest positive integer such that $Nq(\lambda)\in \Z$ for all $\lambda\in L'$.

Let $d$ an integer. By $g_d(D)$ we denote the Gauss sum
  \begin{equation}\label{eq:gauss_sum_d}
    g_d(D)=\sum_{\lambda\in D}e(dq(\lambda))
  \end{equation}
  and $g(D) = g_1(D)$. 
%\end{lemma}
Since fractions of these Gauss sums are of some relevance in this paper, we gather some facts on the sums $g_d(D)$ and quotients thereof (see Lemma 2.1 in \cite{St3})
\begin{lemma}\label{lem:gauss_sum_prop}
  \begin{enumerate}
    \item[i)]
  The Gauss sums $g_d(D)$ satisfy the properties
  \begin{align*}
   & g_{-d}(D) = \overline{g_d(D)} \\
    & g_d(D\oplus D') = g_d(D)g_d(D') \\
    & g_{dr}(D) = g_d(D),
  \end{align*}
  where $r\in \Z$ is square in $(\Z/N\Z)^\times$. 
\item[ii)]
  If $d$ is coprime to $|D|$, we  have
\begin{equation}\label{eq:frac_gauss_sum}
  \frac{g(D)}{g_d(D)} =\leg{d}{|D|}e\left(\frac{(d-1)\odd(D)}{8}\right)
\end{equation}
If $|D|$ is odd, the right-hand side of \eqref{eq:frac_gauss_sum} simplifies to
the quadratic character
\begin{equation}\label{eq:frac_gauss_char}
  \chi_{D}(d) = \leg{d}{|D|}.
\end{equation}
\end{enumerate}
\end{lemma}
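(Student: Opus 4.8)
The three identities in part (i) are elementary. For the first, $q(\lambda)\in\Q/\Z$, so $e(dq(\lambda))$ is a root of unity and $e(-dq(\lambda))=\overline{e(dq(\lambda))}$; summing over $\lambda\in D$ gives $g_{-d}(D)=\overline{g_d(D)}$. For the second, the quadratic form on $D\oplus D'$ is $q(\lambda)+q'(\lambda')$, so the exponential factors and the double sum separates into $g_d(D\oplus D')=\bigl(\sum_{\lambda}e(dq(\lambda))\bigr)\bigl(\sum_{\lambda'}e(dq'(\lambda'))\bigr)=g_d(D)g_d(D')$. For the third, write $r=s^2$ with $s\in(\Z/N\Z)^\times$; since $q$ is quadratic we have $q(s\lambda)=s^2q(\lambda)=rq(\lambda)$, and because $\gcd(s,N)=1$ and $N$ annihilates $D$, multiplication by $s$ permutes $D$. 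Reindexing by $\mu=s\lambda$ then yields $g_{dr}(D)=\sum_{\lambda}e(dq(s\lambda))=\sum_{\mu}e(dq(\mu))=g_d(D)$.

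For part (ii) the plan is to reduce to local factors. Decompose $D=\bigoplus_{p\mid|D|}D_p$ orthogonally into its $p$-components; by the multiplicativity in (i) we have $g_d(D)=\prod_p g_d(D_p)$ and likewise for $g(D)$, so it suffices to compute each ratio $g(D_p)/g_d(D_p)$. Fix an odd prime $p$ and choose a Jordan splitting of $D_p$ into cyclic blocks $\Z/p^{a}\Z$ with forms $\tfrac{c}{p^{a}}x^2$, $c\in\Z_p^\times$. The classical quadratic Gauss sum factors as $\sum_{x\bmod p^a}e(cx^2/p^a)=\leg{c}{p}^{a}u_a$ with $u_a$ independent of $c$; hence in the ratio the universal factors $u_a$ cancel and a block of scale $p^a$ contributes $\leg{d}{p}^{a}$. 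Multiplying over all blocks and using $|D_p|=p^{\sum a}$ gives $g(D_p)/g_d(D_p)=\leg{d}{p}^{\sum a}=\leg{d}{|D_p|}$, with no phase. In particular, when $|D|$ is odd there is no $2$-part, so $\odd(D)=0$ and the product over odd $p$ already yields $\leg{d}{|D|}$, which proves the simplified formula.

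It remains to treat $p=2$, which is the crux. Here the Jordan blocks split into even type (built from $\tfrac{xy}{2^a}$ and $\tfrac{x^2+xy+y^2}{2^a}$) and odd type (the one-dimensional forms $\tfrac{c}{2^a}x^2$ with $c$ odd), and $\odd(D_2)$ is the sum of the diagonal units $c$ over the odd-type blocks, reduced modulo $8$. The even-type blocks behave as in the odd-prime case and contribute only Kronecker-symbol factors, while the odd-type blocks produce the phase, since the $2$-adic Gauss sum $\sum_{x\bmod 2^a}e(cx^2/2^a)$ carries a factor $e(c/8)$ alongside a Kronecker-symbol term of type $\leg{2}{c}$. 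Forming the ratio $g(D_2)/g_d(D_2)$, the phases combine to $e\bigl((d-1)\odd(D_2)/8\bigr)$ (the sign being fixed by the convention that $\psi_p$ is the conjugate of the standard character), while the symbol factors assemble into the $2$-part of $\leg{d}{|D|}$; together with the odd-prime contribution and $\odd(D)=\odd(D_2)$ this gives $\tfrac{g(D)}{g_d(D)}=\leg{d}{|D|}\,e\bigl(\tfrac{(d-1)\odd(D)}{8}\bigr)$. The main obstacle is exactly this $2$-adic bookkeeping: one must track the interaction of the phase $e(c/8)$ with the Kronecker symbols across all odd-type Jordan constituents and confirm that the conventions reproduce precisely the exponent $(d-1)\odd(D)/8$; by comparison, part (i) and the odd primes are routine.
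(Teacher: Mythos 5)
First, for calibration: the paper does not prove this lemma at all --- it imports it from \cite{St3}, Lemma 2.1, and later points to \cite{We1}, Theorem 5.17 for \eqref{eq:frac_gauss_sum} and to \cite{We1}, Lemma 5.8 and \cite{CS} for the oddity facts --- so your argument has to stand on its own. Part (i) does: all three identities are verified correctly (in particular you rightly observe that $s^2\equiv r\pmod{N}$ suffices because $Nq(\lambda)\in\Z$ and multiplication by $s$ permutes $D$). The odd-prime half of (ii) is also complete and correct. But the $p=2$ case, which you yourself call the crux, is only a plan, and the deferred ``bookkeeping'' conceals two genuine problems. First, the odd-type $2$-adic Jordan blocks of the discriminant form of an \emph{even} lattice are $(\Z/2^a\Z,\,cx^2/2^{a+1})$ with $c$ odd, not $cx^2/2^a$ as you write; with your normalization the associated bilinear form is degenerate and no eighth-root-of-unity phase can arise at all. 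Second, and more seriously, the blockwise phases do not combine to the stated exponent: from $1+i^{m}=\sqrt{2}\,\leg{2}{m}e(m/8)$ for odd $m$, the block $(\Z/2\Z,\,x^2/4)$ gives $g/g_d=\leg{2}{d}\,e\bigl((1-d)/8\bigr)$; for $d=3$ this equals $i$, whereas the right-hand side of \eqref{eq:frac_gauss_sum} for this block would be $\leg{3}{2}e(2/8)=-i$. So in the direction $g(D)/g_d(D)$ the identity is false block by block; what the local computation naturally proves is $g_d(D)/g(D)=\leg{d}{|D|}e\bigl((d-1)\odd(D)/8\bigr)$.

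The two versions agree only because, under the paper's standing hypotheses (even rank and even signature), the quotient $\chi_D(d)=g_d(D)/g(D)$ is \emph{real}, i.e.\ a quadratic character --- this is exactly \eqref{eq:quadratic_char_gauss_sum_qout}, quoted from \cite{We1}, Lemma 5.8, and it is what reconciles \eqref{eq:frac_gauss_sum} with \eqref{eq:explicit_quadratic_char}. Your proposal neither proves nor invokes this realness, and your attempt to fix the sign ``by the convention that $\psi_p$ is the conjugate of the standard character'' is off target: $g_d(D)$ is defined directly by \eqref{eq:gauss_sum_d} through $e(dq(\lambda))$, and $\psi_p$ enters only into the local Weil representation, nowhere into these sums. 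To close the gap you must (a) carry out the $2$-adic Gauss sums for the correct blocks (your prediction that the even-type blocks $xy/2^a$ and $(x^2+xy+y^2)/2^a$ contribute only trivially or through symbols is right), and (b) show that under the even-signature assumption the total product of the local eighth roots of unity is $\pm 1$ (e.g.\ via Milgram's formula $g(D)=|D|^{1/2}e(\sig(L)/8)$ applied to $q$ and to the rescaled form $dq$), so that inverting the quotient is harmless. Without (b), the $p=2$ computation as sketched lands on the reciprocal of the asserted formula, as the example above shows.
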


%It is well known that any discriminant form can be decomposed into direct sum of quadratic modules of the following form (cf. \cite{BEF})
%\begin{align*}
%  &  \calA_{p^k}^t = \left(\Z/p^k\Z, \;\frac{tx^2}{p^k}\right),\; p> 2,\quad \calA_{2^k}^t=\left(\Z/2^k\Z,\; \frac{tx^2}{2^{k+1}}\right),\\
%  & \calB_{2^k} = \left(\Z/2^k\Z\oplus\Z/2^k\Z,\; \frac{x^2+2xy+y^2}{2^k}\right),\quad \calC_{2^k} = \left(\Z/2^k\Z\oplus\Z/2^k\Z,\; \frac{xy}{2^k}\right).
%\end{align*}
%The structure of anisotropic finite quadratic modules is well known: In particular, for an odd prime $p$  each $p$-group $D_p$ of a discriminant form  $D$ can be either written as $\calA_p^t$ or as a direct sum $\calA_p^t\oplus \calA_p^1$. For further details we refer to  \cite{BEF}. 

The Weil ``finite'' representation $\rho_L$ is a representation of $\Gamma = \SL_2(\Z)$ on the group ring $\C[D]$. We denote the standard basis of $\C[D]$  by $\{\frake_{\lambda}\}_{\lambda\in D}$. We refer to \cite{Br1} or \cite{BS} for the definition of $\rho_L$ on the generators of $\Gamma$.   

A holomorphic function $f: \H\rightarrow \C[D]$ is called a vector valued modular
form of weight $ \kappa $ and type $ \rho_L $ for $ \Gamma $ if
$ f\mid_{\kappa,L}\gamma= f $ for all $\gamma\in \Gamma$,
and if $f$ is holomorphic at the cusp $\infty$. Here
\begin{equation*}
f\mid_{\kappa,L}\gamma =
j(\gamma,\tau)^{-\kappa}\rho_L(\gamma)^{-1}f(\gamma\tau),
\end{equation*}
where
\[
j(\gamma,\tau) = \det(\gamma)^{-1/2}(c\tau+d)
\]
is the usual automorphy factor if $\gamma=\kzxz{a}{b}{c}{d}\in \GL_2^+(\R)$ and 
the last condition means that all Fourier coefficients $c(\lambda, n)$  of $f$ with $n <  0$ vanish. If in addition $c(\lambda,n) = 0$ for all $n = 0$, we call the corresponding modular form a cusp form. We denote by $M_\kappa(\rho_L)$ the
space of all such modular forms, by $S_\kappa(\rho_L)$ the subspace of  cusp forms. For
more details see e.g. \cite{Br1} or \cite{BS}. Note that $M_\kappa(\rho_L) = \{0\}$ unless
\begin{equation}\label{eq:sig_weight}
2\kappa\equiv \sig(L)\pmod{2}.
\end{equation}
Therefore, if the signature of $L$ is even, only non-trivial spaces of integral
weight can occur. 

%The Petersson scalar product on $S_\kappa(\rho_L)$ is given by
%\begin{equation}\label{eq:petersson_scalar}
%(f,g) =
%\int_{\Gamma\backslash \H}\langle f(\tau), g(\tau)\rangle \im \tau^\kappa d\mu(\tau)
%\end{equation}
%where
%\[
%d\mu(\tau) = \frac{dx\;dy}{y^2}
%\]
%denotes the hyperbolic volume element.

In \cite{BS} Hecke operators $T(m(d^2,1),d)$ on $S_{\kappa}(\rho_L)$ were
defined in the usual way by the action of double cosets $\Gamma m(d^2,1)\Gamma$:
\begin{equation}\label{eq:hecke_op_copr}
f\mid_{\kappa,L} T(m(d^2,1),d)= \det(m(d^2,1))^{\kappa/2-1}\sum_{M\in \Gamma\bs \Gamma m(d^2,1) \Gamma} f\mid_{\kappa,L} (M,d),
\end{equation}
where $d$ is one of the square roots of $\det(m(d^2,1))$. More generally, this definition works for  Hecke operators $T(g,r)$ with $g$ being an element of some subgroup of $\GL_2^+(\Q)$   whose determinant is coprime to $N$ and satisfies $\det(g)\equiv r^2\bmod{N}$. As demonstrated in \cite{BS}, Chapter 5, \eqref{eq:hecke_op_copr}  defines still a Hecke operator if we assume $(d,N)>1$. Yet in general \eqref{eq:hecke_op_copr} does not make sense for matrices $g\in \GL_2^+(\Q)$ with $(\det(g),N)>1$ since there is no definition of $\rho_L$ for these matrices. However, in \cite{St1} a definition of the Weil representation for certain diagonal matrices in $\GL_2^+(\Q)$ was given. Here, we pick up these ideas and adapt them slightly to get smoother formulas and avoid several technical difficulties later in the paper:

Let $k, l\in \Z$ with $k|l$ and $(kl,N)> 1$. We assume further that $\frac{l}{k}$ is a square.
Obviously,
\begin{equation}\label{eq:diagonal_matrix_decomp}
  m(l,k) = m(k,k)m(\frac{l}{k},1) \text{ and } m(k^{-1},l^{-1}) = m(\frac{l}{k},1)m(l,l)^{-1}.
  \end{equation}
The action of $\rho_L(m(k,k))$ was given in \cite{BS}, (3.5), for alle integers $k$ with $(k,N)=1$, simply by  multiplication with $\displaystyle \frac{g(D)}{g_k(D)}$.
Now let $k$ be an integer with $(k,N)= \prod_ip_i^{e_i}$, $D_{p_i}$ the $p_i$-group of $D$ and $D(k)=\left(\bigoplus_iD_{p_i}\right)^\perp$ the orthogonal complement of $\bigoplus_iD_{p_i}$ in $D$. Then by construction, $(k, |D(k)|) = 1$ and in particular $k$ is coprime to the level of $|D(k)|$. 
Then we put
\begin{equation}\label{eq:weil_scalar_ncoprime}
\rho_L(m(k,k))\frake_\lambda = \frac{g(D(k))}{g_k(D(k))}\frake_\lambda
\end{equation}
and fittingly
\begin{equation}\label{eq;weil_scalar_ncoprime_inverse}
  \rho_L(m(k,k)^{-1})\frake_\lambda =\rho_L^{-1}(m(k,k))\frake_\lambda = \frac{g_k(D(k))}{g(D(k))}\frake_\lambda. 
  \end{equation}
Note that this definition is compatible with the one in \cite{BS} in the case that $(k,N)=1$. Also, for a prime $p$ dividing $|D|$ we have
\[
\rho_L(m(p^k,p^k))\frake_\lambda = \frac{g(D_p^\perp)}{g_{p^k}(D_p^\perp)}\frake_\lambda.
\]
Based on this and the decompositions \eqref{eq:diagonal_matrix_decomp}, 
\begin{equation}\label{eq:weil_diageonal_matrix_1}
\rho_L^{-1}(m(l,k))\frake_\lambda = \frac{g_k(D(k))}{g(D(k))}\rho_L^{-1}(m(\frac{l}{k},1))\frake_{\lambda}
\end{equation}
and
\begin{equation}\label{eq:weil_diageonal_matrix}
  \rho_L^{-1}(m(k^{-1},l^{-1}))\frake_\lambda = \frac{g(D(l))}{g_l(D(l))}\rho_L^{-1}(m(\frac{l}{k},1))\frake_\lambda.
\end{equation}
Here, the Weil representations on the right-hand side of \eqref{eq:weil_diageonal_matrix_1} and \eqref{eq:weil_diageonal_matrix} are specified in \cite{BS}, (5.1) and (5.4), respectively. %The factors $\frac{g_k(D)}{g(D)}$ and  $\frac{g(D)}{g_l(D)}$  result from the decompositions

%and the action of $m(k,k)$ and $m(l,l)^{-1}$, respectively in the inverse  Weil representation $\rho_L^{-1}$, which can be defined as in \cite{BS}, (3.5).
Employing the same  arguments as in \cite{St1}, p. 12-13, the Weil representation can in the case of $(kl,N)>1$ be extended to double cosets of the form $\Gamma m(l,k)\Gamma$ and $\Gamma m(k^{-1},l^{-1})\Gamma$. The action of these double cosets by means of \eqref{eq:hecke_op_copr}   yield the Hecke operators $T(m(l,k))$ and $T(m(k^{-1},l^{-1}))$ on the space $S_{\kappa}(\rho_L)$.

\begin{lemma}\label{lem:rel_hecke_ops_eigenvalues}
  Let $k,l\in \Z$ with $k|l$ and $\frac{l}{k}$ a square. The Hecke operators  $T(m(k^{-1},l^{-1})$ and  $T(m(l,k))$ are related to $T(m(\frac{l}{k},1))$ by
  \begin{align*}
    &  T(m(l,k)) = k^{\kappa-2}\frac{g_k(D(k))}{g(D(k))}T(m(\frac{l}{k},1)), \\
    & T(m(k^{-1},l^{-1})) = l^{\kappa-2}\frac{g(D(l))}{g_l(D(l))}T(m(\frac{l}{k},1)).
    \end{align*}
\end{lemma}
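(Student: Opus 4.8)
The plan is to read off both identities directly from the definition \eqref{eq:hecke_op_copr} of the Hecke operators, using the factorisations \eqref{eq:diagonal_matrix_decomp} together with the crucial observation that $m(k,k)$ and $m(l,l)$ are \emph{scalar} matrices and hence central in $\GL_2^+(\Q)$. Fixing a system of left coset representatives $\Gamma m(\tfrac{l}{k},1)\Gamma = \bigsqcup_i \Gamma M_i$, centrality of $m(k,k)$ gives $\Gamma m(l,k)\Gamma = m(k,k)\,\Gamma m(\tfrac{l}{k},1)\Gamma = \bigsqcup_i \Gamma\, m(k,k)M_i$, and likewise $\Gamma m(k^{-1},l^{-1})\Gamma = \Gamma m(\tfrac{l}{k},1)\Gamma\, m(l,l)^{-1} = \bigsqcup_i \Gamma\, M_i m(l,l)^{-1}$. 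Thus one and the same index set $\{M_i\}$ controls all three double cosets, and both identities will follow once the contribution of the scalar factor is isolated.

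First I would compute the slash action of the scalar matrices. A scalar matrix acts trivially as a Möbius transformation, so $m(k,k)\tau = \tau$, and the automorphy factor collapses: $j(m(k,k),\tau) = \det(m(k,k))^{-1/2}(0\cdot\tau+k) = 1$, hence $j(m(k,k),\tau)^{-\kappa}=1$. The only surviving contribution is the Weil representation, which by \eqref{eq;weil_scalar_ncoprime_inverse} acts as the scalar $\tfrac{g_k(D(k))}{g(D(k))}$. Therefore $f\mid_{\kappa,L} m(k,k) = \tfrac{g_k(D(k))}{g(D(k))}\,f$, and by the cocycle property of the slash operator $f\mid_{\kappa,L}(m(k,k)M_i) = \tfrac{g_k(D(k))}{g(D(k))}\,(f\mid_{\kappa,L}M_i)$. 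The same computation for $m(l,l)^{-1}$, now invoking \eqref{eq:weil_scalar_ncoprime}, gives $f\mid_{\kappa,L}(M_i m(l,l)^{-1}) = \tfrac{g(D(l))}{g_l(D(l))}\,(f\mid_{\kappa,L}M_i)$. (Equivalently, these scalars are exactly what \eqref{eq:weil_diageonal_matrix_1} and \eqref{eq:weil_diageonal_matrix} record.)

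Substituting into \eqref{eq:hecke_op_copr} and extracting the constant Gauss-sum scalar from the finite sum, I would obtain for the first operator $f\mid_{\kappa,L}T(m(l,k)) = \tfrac{g_k(D(k))}{g(D(k))}\,\det(m(l,k))^{\kappa/2-1}\sum_i f\mid_{\kappa,L}M_i$. Since $\sum_i f\mid_{\kappa,L}M_i = \det(m(\tfrac{l}{k},1))^{1-\kappa/2}\,f\mid_{\kappa,L}T(m(\tfrac{l}{k},1))$ by the very same \eqref{eq:hecke_op_copr}, the determinant factors combine to $\bigl(\det(m(l,k))/\det(m(\tfrac{l}{k},1))\bigr)^{\kappa/2-1} = (k^2)^{\kappa/2-1}=k^{\kappa-2}$, which is the first identity. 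The second identity has an identical structure, with the scalar $m(l,l)^{-1}$ in place of $m(k,k)$; isolating its determinant contribution produces the factor $l^{\kappa-2}$, as recorded in the remark on normalisation below.

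I expect the main obstacle to be precisely the determinant bookkeeping in the second identity, where $m(k^{-1},l^{-1})$ has determinant $<1$: one must check that the normalisation convention of \eqref{eq:hecke_op_copr}, as extended in \cite{St1} to double cosets of the form $\Gamma m(k^{-1},l^{-1})\Gamma$, is the one attributing to the scalar factor $m(l,l)^{-1}$ the weight $\det(m(l,l))^{\kappa/2-1}=l^{\kappa-2}$, so that the two identities come out with parallel exponents $k^{\kappa-2}$ and $l^{\kappa-2}$. A secondary point to verify explicitly is that the extension of $\rho_L$ to the scalar matrices is multiplicative, i.e.\ $\rho_L(m(k,k)M)=\rho_L(m(k,k))\rho_L(M)$, so that the slash cocycle used above is legitimate; this is immediate once one observes that $\rho_L(m(k,k))$ is a scalar operator and hence commutes with $\rho_L(M)$ for every $M$.
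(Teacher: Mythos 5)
Your proposal is correct and follows essentially the same route as the paper: the paper's proof likewise uses centrality of the scalar matrices to pass between left coset representatives of $\Gamma m(l,k)\Gamma$ and $\Gamma m(\frac{l}{k},1)\Gamma$ (it phrases this as: if $\Gamma m(l,k)\Gamma=\bigcup_i\Gamma\alpha_i$, then $\bigcup_i\Gamma m(k,k)^{-1}\alpha_i$ decomposes $\Gamma m(\frac{l}{k},1)\Gamma$) and then invokes \eqref{eq:hecke_op_copr}; you merely make explicit the slash computation for scalar matrices via \eqref{eq:weil_scalar_ncoprime} and \eqref{eq;weil_scalar_ncoprime_inverse} and the determinant bookkeeping, which the paper leaves implicit.

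One remark: the normalisation issue you flag for the second identity is not a defect of your argument but a genuine subtlety in the paper. A direct application of \eqref{eq:hecke_op_copr} with the factor $\det(m(k^{-1},l^{-1}))^{\kappa/2-1}=(kl)^{1-\kappa/2}$ produces the exponent $l^{2-\kappa}$ rather than the stated $l^{\kappa-2}$, exactly as your bookkeeping suggests; the paper's proof, which just says the second case ``proceeds the same way,'' does not address this. Notably, the paper itself appears to use the exponent $p^{-l(\kappa-2)}$ in \eqref{eq:local_zeta_bad_prime} and in the subsequent factorisation into $Z_p(s+\kappa-2,f)L_p(\chi,2s+\kappa-2)$, even though the lemma, Corollary \ref{cor;rel_eigenvalues} and \eqref{eq:rel_local_zeta} carry $l^{\kappa-2}$, so your instinct to pin down the convention of \cite{St1} before trusting the sign of the exponent is precisely the right one.
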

\begin{proof}
  Let $\Gamma m(l,k)\Gamma = \bigcup_i \Gamma \alpha_i$ be a decomposition of $\Gamma m(l,k)\Gamma$ into left cosets. Then $\bigcup_i \Gamma m(k,k)^{-1}\alpha_i$ is a decomposition of $\Gamma m(\frac{l}{k},1) \Gamma$ into left cosets.
  %Comment:
  %\begin{align*}
  %&\det(m(l,k))^{\kappa/2-1}\sum_{M\in \Gamma\bs \Gamma m(l,k) \Gamma} f\mid_{\kappa,L} M\\
  %& = \det(m(k,k))^{\kappa/2-1}\det(m(\frac{l}{k},1))^{\kappa/2-1}\sum_{M\in \Gamma\bs \Gamma m(l,k) \Gamma} f\mid_{\kappa,L} m(k,k)\mid_{\kappa,L}m(k,k)^{-1}M\\
  % &= \det(m(k,k))^{\kappa/2-1}\frac{g_k(D)}{g(D)}\det(m(\frac{l}{k},1))^{\kappa/2-1}\sum_{M\in \Gamma\bs \Gamma m(\frac{l}{k},1) \Gamma} f\mid_{\kappa,L} M\\
  % & k^{\kappa-2}\frac{g_k(D)}{g(D)}f\mid_{\kappa,L}T(m(\frac{l}{k},1))
  %\end{align*}
  Consequently, using \eqref{eq:hecke_op_copr} we obtain
  \[
  T(m(l,k)) = k^{\kappa-2}\frac{g_k(D(k))}{g(D(k))}T(m(\frac{l}{k},1)).
  \]
  The proof of the identity for $T(m(k^{-1},l^{-1}))$ procceeds the same way.  
  \end{proof}

As a corollary we immediately get
\begin{corollary}\label{cor;rel_eigenvalues}
  Let $k,l\in \Z$ with $k|l$ and $\frac{l}{k}$ a square. Also, let $f\in S_{\kappa,L}$ be simultaneous eigenform of all Hecke operators $T(m(d^2,1))$ with eigenvalue $\lambda(m(d^2,1))$. Then $f$ is also an eigenform with respect to the Hecke operators $T(m(k^{-1},l^{-1}))$ and $T(m(l,k))$ with eigenvalues
  \begin{equation}\label{eq:rel_eigenvalues}
    \lambda(m(k^{-1},l^{-1})) = l^{\kappa-2}\frac{g(D(l))}{g_l(D(l))}\lambda(m(\frac{l}{k},1)) \text{ and } \lambda(m(l,k)) = k^{\kappa-2}\frac{g_k(D(k))}{g(D(k))}\lambda(m(\frac{l}{k},1))
  \end{equation}
  respectively and vice versa. 
  \end{corollary}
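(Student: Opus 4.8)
Since the result is flagged as an immediate corollary, the plan is simply to feed the eigenform $f$ into the two operator identities proved in Lemma~\ref{lem:rel_hecke_ops_eigenvalues}. First I would observe that the hypothesis that $\frac{l}{k}$ is a square lets me write $\frac{l}{k}=d^2$ for some $d\in\Z$, so that $T(m(\frac{l}{k},1))=T(m(d^2,1))$ is one of the operators for which $f$ is assumed to be a simultaneous eigenform. Hence $f\mid_{\kappa,L}T(m(\frac{l}{k},1))=\lambda(m(\frac{l}{k},1))\,f$, and this is the only property of $f$ that will actually be used.

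Next I would apply each identity of Lemma~\ref{lem:rel_hecke_ops_eigenvalues} to $f$. Because $T(m(l,k))$ and $T(m(k^{-1},l^{-1}))$ are, respectively, the explicit scalars $k^{\kappa-2}\frac{g_k(D(k))}{g(D(k))}$ and $l^{\kappa-2}\frac{g(D(l))}{g_l(D(l))}$ times the single operator $T(m(\frac{l}{k},1))$, acting on $f$ reproduces $f$ multiplied by the product of that scalar and $\lambda(m(\frac{l}{k},1))$. This is exactly the assertion that $f$ is an eigenform of both operators with the eigenvalues recorded in \eqref{eq:rel_eigenvalues}.

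For the converse direction (the ``vice versa'') I would note that the two scalar factors are nonzero: $k$ and $l$ are nonzero integers, and the Gauss sums attached to the non-degenerate discriminant forms $D(k)$ and $D(l)$ all have absolute value $|D(k)|^{1/2}$, resp. $|D(l)|^{1/2}$, so they do not vanish (this is consistent with the product and inversion properties collected in Lemma~\ref{lem:gauss_sum_prop}). Dividing the operator identities by these units then shows that any eigenform of $T(m(l,k))$ or of $T(m(k^{-1},l^{-1}))$ is automatically an eigenform of $T(m(\frac{l}{k},1))$ with the inverse-scaled eigenvalue, yielding the reverse implication. The only point requiring any care is this non-vanishing of the Gauss-sum ratios, which guarantees that the relation between eigenvalues is a genuine bijection; everything else is a direct substitution of the preceding lemma, which is why the statement is recorded as an immediate corollary.
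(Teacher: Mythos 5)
Your proposal is correct and matches the paper's (implicit) argument exactly: the paper records this as an immediate consequence of Lemma~\ref{lem:rel_hecke_ops_eigenvalues}, obtained by applying the two operator identities to $f$ with $\frac{l}{k}=d^2$, just as you do. Your added remark that the scalar factors $k^{\kappa-2}\frac{g_k(D(k))}{g(D(k))}$ and $l^{\kappa-2}\frac{g(D(l))}{g_l(D(l))}$ are nonzero (since $(k,|D(k)|)=1$ makes the Gauss-sum ratios roots of unity by Lemma~\ref{lem:gauss_sum_prop}) is the right justification for the ``vice versa'' direction, which the paper leaves unstated.
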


 Let $f\in S_\kappa(\rho_L)$ be a simultaneous eigenform of all Hecke operators $T(m(d^2,1)),\; d\in\N$, with eigenvalues $\lambda_f(m(d^2,1))$ (see Remark 6.1, ii) in \cite{St1} for some details when such a cusp form exists).
% Then the relations \eqref{eq:hecke_relation_1} and \eqref{eq:hecke_relation_2} immediately imply that $f$ is  an eigenform of all Hecke operators $T(m(k^{-1},l^{-1}))$ and $T(m(k,l))$ with eigenvalues
%\begin{equation}\label{eq:eigenvalues_rel_1}
%  \lambda_f(m(k,l)) = k^{\kappa-2}\frac{g_k(D)}{g(D)}\lambda_f(m(\frac{l}{k},1)%) 
%\end{equation}
%and
%\begin{equation}\label{eq:eigenvalues_rel_2}
%  \lambda_f(m(k^{-1},l^{-1}))=
%  \begin{cases}
%    (kl)^{-(\kappa-2)}\lambda_f(m(k,l)), & (kl,N)=1,\\
%    (kl)^{-(\kappa-2)}\frac{g(D)}{g_{kl}(D)}\lambda_f(m(k,l)), & (kl,N)>1.
%    \end{cases}
%  \end{equation}
In \cite{St1} the analytic properties of the {\it standard zeta function}
\begin{equation}\label{eq:zeta_func_eigenvalues}
Z(s,f)= \sum_{d\in \N}\lambda_f(m(d^2,1))d^{-2s}
\end{equation}
were studied. By Theorem 5.6 in \cite{BS} this series possesses an Euler product
\begin{equation}\label{eq:euler_product}
 % \begin{split}
    Z(s,f) = \prod_p Z_p(s,f) 
%   &= \prod_{p}\sum_{k\in \N}\lambda_f(m(p^{2k},1))p^{-2ks}.
%\end{split}
\end{equation}
with $Z_p(s,f) =\sum_{k\in \N_0}\lambda_f(m(p^{2k},1))p^{-2ks}.$ Let us consider the related local zeta function
\begin{equation}\label{eq:ext_local_zeta}
  \calZ_p(s,f)=\sum_{(k,l)\in \Lambda_+}\lambda_{f}(m(p^{-k},p^{-l}))p^{-s(k+l)}.
\end{equation}
In view of Corollary \ref{cor;rel_eigenvalues} we have
\begin{corollary}\label{cor:rel_local_zeta}
  The local zeta function $\calZ_p(s,f)$ is equal to
  \begin{equation}\label{eq:rel_local_zeta}
    \begin{split}
      &  \sum_{(k,l)\in \Lambda_+}p^{l(\kappa-2)}\frac{g(D(p^l))}{g_{p^l}(D(p^l))}\lambda(m(p^{l-k},1))p^{-s(k+l)} \\
      &= \sum_{(k,l)\in \Lambda_+}\frac{g(D(p^l))}{g_{p^l}(D(p^l))}\lambda(m(p^{l-k},1))p^{-sk}p^{(\kappa-2-s)l}.
    \end{split}
    \end{equation}
  \end{corollary}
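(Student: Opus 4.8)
The plan is to read off the identity directly from Corollary \ref{cor;rel_eigenvalues} by substituting the eigenvalue relation into the defining series \eqref{eq:ext_local_zeta} term by term, and then rearranging the resulting power of $p$. Since $\calZ_p(s,f)$ is defined as $\sum_{(k,l)\in \Lambda_+}\lambda_f(m(p^{-k},p^{-l}))p^{-s(k+l)}$, the whole computation reduces to rewriting each individual eigenvalue $\lambda_f(m(p^{-k},p^{-l}))$.

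First I would verify that the hypotheses of Corollary \ref{cor;rel_eigenvalues} are met for every summation index. Fix $(k,l)\in \Lambda_+$ and apply the corollary with the integers $p^k$ and $p^l$ in place of its $k$ and $l$. The divisibility $p^k \mid p^l$ holds because $k\le l$ by the definition of $\Lambda_+$ in \eqref{eq:lambda_+}, and the quotient $p^l/p^k = p^{l-k}$ is a perfect square because $k+l\in 2\Z$ forces $l-k\in 2\Z$. Hence the corollary applies and gives
\[
\lambda_f(m(p^{-k},p^{-l})) = (p^l)^{\kappa-2}\frac{g(D(p^l))}{g_{p^l}(D(p^l))}\lambda(m(p^{l-k},1)) = p^{l(\kappa-2)}\frac{g(D(p^l))}{g_{p^l}(D(p^l))}\lambda(m(p^{l-k},1)).
\]

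Substituting this expression into \eqref{eq:ext_local_zeta} immediately produces the first line of \eqref{eq:rel_local_zeta}, and the second line then follows from the elementary regrouping of exponents $p^{l(\kappa-2)}p^{-s(k+l)} = p^{-sk}p^{(\kappa-2-s)l}$ carried out inside the summation. The argument is thus a direct corollary of the eigenvalue relation already established, and I do not anticipate any genuine obstacle. The only point requiring care is the observation that membership in $\Lambda_+$ supplies exactly the two hypotheses needed to invoke Corollary \ref{cor;rel_eigenvalues}, namely the divisibility and the square condition; both are automatic consequences of the defining conditions $k\le l$ and $k+l\in 2\Z$ recorded in \eqref{eq:lambda_+}.
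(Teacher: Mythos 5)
Your proposal is correct and matches the paper's (implicit) argument exactly: the paper derives \eqref{eq:rel_local_zeta} simply ``in view of'' Corollary \ref{cor;rel_eigenvalues}, i.e.\ by the very substitution $\lambda_f(m(p^{-k},p^{-l})) = p^{l(\kappa-2)}\frac{g(D(p^l))}{g_{p^l}(D(p^l))}\lambda(m(p^{l-k},1))$ into \eqref{eq:ext_local_zeta} followed by regrouping the exponents. Your explicit verification that $(k,l)\in\Lambda_+$ supplies the divisibility $p^k\mid p^l$ and the square condition on $p^{l-k}$ is exactly the point the paper leaves tacit, and it is correct.
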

%\begin{equation}\label{eq:gen_local_zeta_func}
%\calZ_p(s,f) =
%\begin{cases}\sum_{(k,l)\in \Lambda_+}\lambda_f(m(p^k,p^l))p^{-(s+\kappa-2)(k+l)},& (p,N)=1,\\
%  \sum_{(k,l)\in \Lambda_+}\frac{g(D)}{g_{p^{k+l}}(D)}\lambda_f(m(p^k,p^l))p^{-(s+\kappa-2)(k+l)},& p\mid N.
%  \end{cases}
%\end{equation}
Subsequently, we want to relate the zeta functions $Z(s,f)$ and $\calZ(s,f)$. To distinguish the cases of ``good'' and ``bad'' primes $p$, we will evaluate the quotients $\displaystyle \frac{g(D(p^l))}{g_{p^l}(D(p^l))}$ explicitly.
To this end, we assume that level $N$  of $L$ is  {\it square free}. We keep this assumption whenever we are dealing with the zeta function $\calZ(s,f)$.
By \cite{We1}, Lemma 5.8, we know that
\begin{equation}\label{eq:quadratic_char_gauss_sum_qout}
  n\mapsto \chi_{D}(n)=\frac{g_n(D)}{g(D)} 
\end{equation}
is a quadratic character of $(\Z/N\Z)^\times$. More specifically, by Lemma \ref{lem:gauss_sum_prop}, ii) 
\begin{equation}\label{eq:explicit_quadratic_char}
  \frac{g(D)}{g_n(D)}  =  \frac{g_n(D)}{g(D)} = \leg{n}{|D|}e\left(\frac{(n-1)\text{oddity}(D)}{8}\right).
  \end{equation}
A proof for the last equation can be found in \cite{We1}, Theorem 5.17. It is known that if  $|D|$ is odd,  $\text{ oddity}(D)\equiv 0\bmod{8}$, see e. g. \cite{We1}, Lemma 5.8 or \cite{CS}, Chap. 15, $\S$ 7. Thus, $\displaystyle \frac{g(D)}{g_n(D)}$ simplifies to 
\begin{equation}\label{eq:explicit_quadratic_char_D_odd}
 \chi_{D}(n)  = \leg{n}{|D|}
\end{equation}
in this case. Consequently, we have
\begin{equation}\label{eq:explicit_gauss_sum_quot}
  \frac{g(D(p^l))}{g_{p^l}(D(p^l))} = \chi_{D(p^l)}(p^l)
\end{equation}
for all $l\in \N_0$. Notice that if $p^l$ is a square, $g_{p^l}(D(p^l))= g(D(p^l)))$ and $\chi_{D(p^l)}(p^l)=1$. Also notice that for a prime $p$ the group $D(p^l)$ is equal to $D(p) = D_p^\perp$. For the subsequent calculations we will therefore use the symbol $\chi_{D(p)}$ instead of $\chi_{D(p^l)}$. 
%We now evaluate the quotient $\frac{g(D)}{g_{p^r}(D)}, \; r\in \N,$ more explicitly for the case  $p\mid N$. Decomposing $D$ into $p$-groups $D_p$, we find
%\begin{equation}\label{eq:gauss_sum_quotient}
%\frac{g(D)}{g_{p^r}(D)} = \frac{g(D_p)}{g_{p^r}(D_p)}\prod_{q\nmid N}\frac{g(D_q)}{g_{p^r}(D_q)}.
%\end{equation}
%By the assumption on  $N$,  the level of $D_p$ is $p$ and therefore
%\begin{equation}\label{eq:eval_gauss_sum_ncpr}
%g_{p^r}(D_p) = |D_p|. 
%\end{equation}
%Let $D_p^\perp$ be the orthogonal complement of $D_p$ in $D$ with respect to $\langle\cdot,\cdot\rangle$.  Then we clearly have  $D=D_p\oplus D_p^\perp$ and $(|D_p^\perp|, p)=1$. % We denote with $\chi_{D_p^\perp}$ the quadratic charachter 
%\[
%\chi_{D(p')}(n)=\frac{g_n(D(p'))}{g(D(p'))}.
%\]
%Since $(p,q)=1$, in view of \eqref{eq:eval_gauss_sum_ncpr} and Milgram's forumula the right-hand side of \eqref{eq:gauss_sum_quotient} can be written in the form 
%\begin{equation}\label{eq:gauss_sum_quotient_char}
%\frac{e(\sig(D_p)/8)}{|D_p|^{1/2}}\chi_{D_p^\perp}(p^r).
%\end{equation}
%.

%Decomposing $\Lambda_+$ into the subsets
%\[
%\{(k,l)\in \Lambda_+\;|\; k=0\} \text{ and } \{(k,l)\in \Lambda_+\;|\; k>0\}, 
%\]
For a prime $p$ dividing  $N$ %by means of \eqref{eq:eigenvalues_rel_1} and \eqref{eq:gauss_sum_quotient_char}
the right-hand side of \eqref{eq:rel_local_zeta} becomes
\begin{equation}\label{eq:local_zeta_bad_prime}
  \begin{split}
    % \sum_{l\in \N_0}\lambda_f(m(p^{2l},1))p^{-2l(s + \kappa-2)} + 
 \sum_{(k,l)\in \Lambda_+}\chi_{D(p)}(p^l)\lambda_f(m(p^{l-k},1))p^{-l(\kappa-2)}p^{-s(k+l)}.
    \end{split}
\end{equation}
%Note that the sum over $l$ starts with $l=1$, the same holds for $k$ in the sum over $\Lambda_+$.

  For a ``good'' prime $p\nmid N$ we have
  \begin{equation}\label{eq:local_zeta_good_prime}
  \calZ_p(s,f) = \sum_{(k,l)\in \Lambda_+}\chi_{D}(p^l)\lambda_f(m(p^{l-k},1))p^{-l(\kappa-2)}p^{-s(k+l)}.
  \end{equation}
  Let $\chi$ be either of the characters $\chi_{D}$, $\chi_{D(p)}$. Then we can write for either of the above sums:% over $(k,l)\in \Lambda_+$:
  \begin{align*}
    \sum_{(k,l)\in \Lambda_+}\chi(p^l)p^{-k(2s+\kappa-2)}\lambda_f(m(p^{l-k},1))p^{-(l-k)(s+\kappa -2)}.
  \end{align*}
  %Comment:
  %\begin{align*}
  %p^{-l(\kappa+2)}p^{-s(k+l)} &= p^{-k2s}p^{-k(\kappa+2)}p^{k(\kappa+2)}p^{-l(\kappa +2)}p^{-s(l-k)}\\
  %  &= p^{-k(2s + \kappa-2)}p^{-(l-k)(s+\kappa -2)}
  %\end{align*}
  For any fixed $k\in \N_0$ the index $l$ runs through the set $\{2n+k\;|\; n\in \N_0\}$ to satisfy the conditions $l+k\in 2\Z$ and $l\ge k$. Therefore, the index $l-k$ runs through
  \[
  \{2(n-k)\; |\; n\in \N_0 \text{ with } n\ge k\} = 2\N_0. 
  \]
  Thus, we may rewrite the latter series above as
  \begin{align*}
    &  \sum_k p^{-k(2s+\kappa-2)}\sum_n\chi(p^{2n+k})\lambda_f(m(p^{2n},1))p^{-2n(s+\kappa-2)} \\
    &= \sum_{k}\chi(p^k)p^{-k(2s+\kappa-2)}\sum_{n}\lambda_f(m(p^{2n},1))p^{-2n(s+\kappa -2)}. 
  \end{align*}
  Taking this into account, we may write \eqref{eq:local_zeta_bad_prime} in the form 
  \begin{equation}\label{eq:local_zeta_bad_prime_II}
%    \begin{split}
%     & 1+ \frac{e(\sig(D_p)/8)}{|D_p|^{1/2}}\left((Z_p(s+\kappa-2,f)-1) + (L_p(\chi_{D_p^\perp},2s+\kappa-2)-1)Z_p(s+\kappa-2,f)\right)\\
    Z_p(s+\kappa-2,f)L_p(\chi_{D(p)},2s+\kappa-2). 
%    \end{split}
    \end{equation}
  Accordingly, for a ``good'' prime, \eqref{eq:local_zeta_good_prime} can be expressed as  $Z_p(s+\kappa -2, f)L_p(\chi_D,2s+\kappa-2)$. 
 
  Globally, we then have
  \begin{equation}\label{eq:global_general_zeta_func}
    \calZ(s,f) = \prod_{p\mid |D|}L_p(2s+\kappa-2,\chi_{D(p)})L(2s+\kappa-2,\chi_D)Z(s+\kappa-2,f),
  \end{equation}
  where
  \begin{enumerate}
    \item[i)]
      \[
      L_p(s,\chi_{D(p)})  =(1-\chi_{D(p)}(p)p^{-s})^{-1},
      \]
    \item[ii)]
      $L(s,\chi_D)$  is the Dirichlet $L$-series associated to $\chi_D$. 
  \end{enumerate}

  We can now state  the following theorem regarding the analytic properties of $\calZ(s,f)$.
  \begin{theorem}\label{thm:analytic_propo_zeta_func}
    Let $\kappa\in 2\Z,\; \kappa\ge 3,$ satisfy $2\kappa +\sig(L)\equiv 0\bmod{4}$ and $f\in S_\kappa(\rho_L)$ a common eigenform of all Hecke operators $T(m(k^2,1))$, $k\in \N$. Then the zeta function $\calZ(s,f)$ has a meromorphic continuation to the whole $s$-plane. 
  \end{theorem}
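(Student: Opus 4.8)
The plan is to deduce the theorem directly from the factorization \eqref{eq:global_general_zeta_func} already established above, which writes $\calZ(s,f)$ as a finite product of three types of factor, and then to observe that each factor extends meromorphically to all of $\C$. Since a finite product of meromorphic functions is again meromorphic, the assertion follows at once. In this sense the analytic work has, in effect, already been done in the preparatory computation leading to \eqref{eq:global_general_zeta_func}, and the proof amounts to assembling known continuation statements.

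First I would record that, under the hypotheses $\kappa\ge 3$ and $2\kappa+\sig(L)\equiv 0\bmod{4}$, the Euler product \eqref{eq:euler_product} for $Z(s,f)$ and the Dirichlet series for $L(2s+\kappa-2,\chi_D)$ both converge absolutely on some right half-plane $\re(s)>\sigma_0$, so that the identity \eqref{eq:global_general_zeta_func} holds there. Pinning down $\sigma_0$ explicitly is routine and I would not dwell on it; all that is needed is that the identity is valid on one nonempty half-plane, so that the meromorphic continuations of the individual factors automatically glue into a continuation of $\calZ(s,f)$ by the identity theorem.

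Next I would treat the three factors separately. The finite product $\prod_{p\mid |D|}L_p(2s+\kappa-2,\chi_{D(p)})$ consists of factors $L_p(w,\chi_{D(p)})=(1-\chi_{D(p)}(p)p^{-w})^{-1}$, each the reciprocal of an entire function of $w$ and hence meromorphic on $\C$; substituting $w=2s+\kappa-2$ and taking a finite product preserves meromorphy. The Dirichlet $L$-function $L(w,\chi_D)$ continues meromorphically to the whole $w$-plane (entire when $\chi_D$ is nonprincipal, with at most a simple pole at $w=1$ otherwise), and the affine substitution $w=2s+\kappa-2$ again preserves this. Finally, by the analytic results of \cite{St1} for the zeta function \eqref{eq:zeta_func_eigenvalues}, $Z(w,f)$ has a meromorphic continuation to all of $\C$, and the shift $w=s+\kappa-2$ yields a meromorphic function of $s$. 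Multiplying these three meromorphic functions produces the desired meromorphic continuation of $\calZ(s,f)$.

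I expect no genuine analytic obstacle once \eqref{eq:global_general_zeta_func} is in hand: the real content is the meromorphic continuation of $Z(s,f)$, which is imported from \cite{St1}. The only point demanding care is the bookkeeping behind \eqref{eq:global_general_zeta_func} itself, namely matching the bad-prime contributions \eqref{eq:local_zeta_bad_prime_II}, which carry the character $\chi_{D(p)}$, against the good-prime contributions \eqref{eq:local_zeta_good_prime}, and checking that the trivial Euler factors of $L(w,\chi_D)$ at the primes $p\mid |D|$ (where $\chi_D(p)=0$) are exactly compensated by the separate finite product over those primes. Verifying that this rearrangement of the double sum over $\Lambda_+$ is legitimate in the region of absolute convergence is where the attention goes; the continuation itself is then immediate.
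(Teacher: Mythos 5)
Your proposal is correct and takes essentially the same route as the paper: both deduce the theorem from the factorization \eqref{eq:global_general_zeta_func}, importing the meromorphic continuation of $Z(s,f)$ from Theorem 6.6 of \cite{St1} and handling the Dirichlet $L$-function $L(2s+\kappa-2,\chi_D)$ and the finite product of bad-prime factors by standard facts. The only detail the paper's proof adds that you omit is a compatibility check that Theorem 6.6 of \cite{St1} still applies under the slightly altered action of the Weil representation on scalar matrices used here --- the factor $\frac{g_d(L)}{g(L)}$ cancels in the relevant pullback formula, so the continuation carries over unchanged.
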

  \begin{proof}
    This results from \eqref{eq:global_general_zeta_func}: Theorem 6.6 of \cite{St1} provides the desired property for $Z(s,f)$. In fact, the slightly altered action of the Weil representation on scalar matrices only affects the pullback formula in Theorem 5.3 but not the proof of Theorem 6.6 as the factor $\displaystyle \frac{g_d(L)}{g(L)}$ cancels out. For $L(s,\chi_D)$  this is well known and clear for the remaining factor  anyway.  
  \end{proof}

\section{Standard $L$-function of a common Hecke eigenform}\label{subsec:standard_L_func}

This chapter is concerned with several issues regarding a standard $L$-function of a vector valued Hecke eigenform. First, we will motivate and define a standard $L$-function $L(s,F)$ attached to a vector valued automorphic form $F$. Its definition is based on the factorization of the Hecke series. To this end, we follow the classical contributions in the literature in this regard (see e. g. the approach of Bouganis and Marzec (\cite{BM}), B\"ocherer and Schulze-Pillot (\cite{BoSP}) and Shimura (\cite{Sh})) adapted to our situation. Via the correspondence in \cite{St3}, Theorem 5.9, it is then possible to associate the same $L$-function to the corresponding Hecke eigenform $f_F$. Along the way we establish most of the results to prove a relation between the standard zeta function $\calZ(s,f)$ and $L(s,f)$. Afterwards, based on this relation, we  prove that the introduced standard $L$-function can be continued meromorphically to the whole $s$-plane.  

We assume in the whole Section that the $p$-group $D_p$ is anisotropic. We adopt the notation of \cite{St3}, Section 3. 

\subsection{Standard $L$-function of a vector valued automorphic form}
This section provides the necessary theory to define a standard $L$-function attached to a vector valued automorphic form  $F$ as defined in \cite{St3}. This is essentially the well known theory of spherical functions as it appears in many places (see e. g. \cite{Ca}, \cite{McD} or \cite{Sa}).
Here we largely follow \cite{Ar}, Chapter 5, and translate several statements therein to our setting. For primes $p$ which are coprime to $|D|$ the proofs carry over almost verbatim. In the case of primes $p$ dividing $|D|$ the Hecke algebra $\calH^+(\calQ_p//\calK_p,\omega_p)$ is more complicated due to the Weil representation, causing serious difficulties. The established results then provide  the means to define  a standard $L$-function $L(s,F)$ along the lines of  \cite{BM}, Chapter 7.2. 

The action of the local  Weil representation of the group $\calK_0(p)$ on $S_{L_p}^{N(\Z_p)}$ will be needed frequently in the proof of the next few assertions.

\begin{lemma}\label{lem:weil_repr_K_0_p}
  Let $\kzxz{a}{b}{c}{d}\in \calK_0(p)$. Then
  \begin{equation}\label{eq:decomp_K_0_p}
  \kzxz{a}{b}{c}{d} = n\_(ca^{-1})m(a,a^{-1}(ad-bc))n(ba^{-1})
  \end{equation} 
  and
  \begin{equation}\label{eq:weil_repr_K_0_p}
    \omega_p\kzxz{a}{b}{c}{d}\varphi_p^{(0)} = \chi_{D_p}(a)\varphi_p^{(0)}.
    \end{equation}
\end{lemma}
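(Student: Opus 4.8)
The plan is to establish the matrix factorisation \eqref{eq:decomp_K_0_p} by a direct computation and then to push it through the homomorphism $\omega_p$, evaluating the three resulting factors one at a time; all the genuine content will sit in the lower-triangular factor.

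First I would note that for $\kzxz{a}{b}{c}{d}\in\calK_0(p)$ the entry $a$ is a $p$-adic unit: the determinant $ad-bc$ lies in $(\Z_p^\times)^2$, while $c\in p\Z_p$ gives $bc\in p\Z_p$, so $ad\equiv ad-bc\not\equiv 0\bmod p$ and hence $a\in\Z_p^\times$. This legitimises $a^{-1}$, and \eqref{eq:decomp_K_0_p} then follows by multiplying out the right-hand side: the two right factors give $\kzxz{a}{b}{0}{a^{-1}(ad-bc)}$, and left multiplication by $n\_(ca^{-1})$ turns the bottom row into $(c,\,a^{-1}(cb+ad-bc))=(c,d)$. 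Applying $\omega_p$ and using that it is a representation of $\calK_p$, I would then treat the factors separately. For the upper unipotent, the first line of \eqref{eq:weil_rep_explicit} gives $\omega_p(n(ba^{-1}))\varphi_p^{(0)}=\psi_p(ba^{-1}q(0))\varphi_p^{(0)}=\varphi_p^{(0)}$ since $q(0)=0$. For the diagonal factor, the formula for $\omega_p(m(t_1,t_2))$ with $\mu_p=0$ leaves the coset index $0$ unchanged and contributes the scalar $\leg{a}{|D_p|}=\chi_{D_p}(a)$.

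The hard part will be the lower-triangular factor $n\_(ca^{-1})$, and this is exactly where the standing anisotropy of $D_p$ is indispensable. Put $c'=ca^{-1}\in p\Z_p$. I would first argue that anisotropy forces every Jordan component of $D_p$ to have exponent $p$: a component $\Z/p^{j}\Z$ with $j\ge 2$, generated by $x$ with $q(x)=u/p^{j}$ for a unit $u$, would yield the nonzero element $p^{\,j-1}x$ of order $p$ with $q(p^{\,j-1}x)=p^{\,j-2}u\in\Z_p$, i.e.\ isotropic, a contradiction. Consequently $q(\mu)\in\tfrac1p\Z_p/\Z_p$ for every $\mu\in D_p$, so $c'q(\mu)\in\Z_p$ and $\psi_p(-c'q(\mu))=1$. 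By the first line of \eqref{eq:weil_rep_explicit} this means $\omega_p(n(-c'))$ fixes every basis vector $\varphi_p^{(\mu)}$ and is therefore the identity on all of $S_{L_p}$. Since $n\_(c')=w\,n(-c')\,w^{-1}$ inside $\SL_2(\Z_p)$, conjugating gives $\omega_p(n\_(c'))=\omega_p(w)\,\omega_p(n(-c'))\,\omega_p(w)^{-1}=\id$, so in particular $\omega_p(n\_(ca^{-1}))\varphi_p^{(0)}=\varphi_p^{(0)}$. Multiplying the three contributions together yields $\omega_p\kzxz{a}{b}{c}{d}\varphi_p^{(0)}=\chi_{D_p}(a)\varphi_p^{(0)}$, which is \eqref{eq:weil_repr_K_0_p}. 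The only point needing care beyond bookkeeping is the reduction ``anisotropic $\Rightarrow$ exponent $p$'', which is what makes $\omega_p$ trivial on the lower unipotents with entry in $p\Z_p$.
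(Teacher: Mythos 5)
Your proof is correct and has the same skeleton as the paper's: the identical factorisation \eqref{eq:decomp_K_0_p} (with the same preliminary remark that $c\in p\Z_p$ forces $a\in\Z_p^\times$), followed by evaluating $\omega_p$ factor by factor, with $n(ba^{-1})$ acting trivially since $q(0)=0$ and the diagonal factor $m(a,a^{-1}(ad-bc))\in\calD_p$ contributing the scalar $\chi_{D_p}(a)$. The single point of divergence is the lower-triangular factor: the paper disposes of $\omega_p(n\_(ca^{-1}))\varphi_p^{(0)}=\varphi_p^{(0)}$ by citing \cite{St3}, Lemma 3.1, whereas you prove the triviality from scratch — anisotropy forces every Jordan component of $D_p$ to have exponent $p$ (your reduction via $p^{j-1}x$ is valid in the paper's setting, where $\omega_p$ is only nontrivial at odd $p$), hence $pq(\mu)\in\Z$ for all $\mu\in D_p$, so $\omega_p(n(-c'))=\operatorname{id}$ for $c'\in p\Z_p$, and conjugation through $n\_(c')=w\,n(-c')\,w^{-1}$ gives $\omega_p(n\_(c'))=\operatorname{id}$ on all of $S_{L_p}$. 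This buys a little more than the paper states at this step: your argument is self-contained, yields the stronger operator identity rather than only the fixed-vector statement actually needed, and makes explicit that the standing anisotropy hypothesis of the section is exactly what trivialises the lower unipotents with entry in $p\Z_p$ — without it, $\psi_p(-c'q(\nu_p))$ need not equal $1$ and the conjugation argument would fail.
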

\begin{proof}
  First note that if $c\in p\Z_p$, then $a$ must be an element of $\Z_p^\times$. The decomposition \eqref{eq:decomp_K_0_p} can be checked by a direct calculation.

  To compute $\omega_p\kzxz{a}{b}{c}{d}\varphi_p^{(0)}$, we use the defining  formulas of $\omega_p$ (cf. Section \ref{subsec:local_weil} and \cite{St3}, Theorem 4.8 for matrices in $\calD_p$):  
  \begin{align*}
    \omega_p\kzxz{a}{b}{c}{d}\varphi_p^{(0)} &= \omega_p(n\_(ca^{-1}))\omega_p(m(a,a^{-1}(ad-bc)))\omega_p(n(ba^{-1}))\varphi_p^{(0)} \\
    & = \omega_p(n\_(ca^{-1}))\omega_p(m(a,a^{-1}(ad-bc)))\varphi_p^{(0)}\\
    & = \omega_p(n\_(ca^{-1}))\leg{a}{|D_p|}\varphi_p^{(0)}\\
    & = \leg{a}{|D_p|}\varphi_p^{(0)}.
  \end{align*}
  For the second equation we employed that $\psi_p(ca^{-1}q(0)) = 1$,  the last equation is due to \cite{St3}, Lemma 3.1. 
  \end{proof}

\begin{lemma}\label{lem:algebra_hom}
  Let $p$ be a prime.
  \begin{enumerate}
  \item[i)]
If $p$ is  coprime to $|D|$,  any $\C$-algebra homomorphism $\xi: \calH(\calQ_p//\calK_p, \omega_p)\rightarrow \C$ is of the form
  \begin{equation}\label{eq:hecke_algebra_hom}
   T\mapsto \xi(T) = \widehat{\chi}_S(T),% \sum_{(k,l)\in \Z^2} S(\langle f, \varphi_p^{(0)}\rangle)(m(p^k,p^l))\chi(m(p^k,p^l)),
  \end{equation}
  Here $\chi$ is some uniquely determined unramified character of $\calM_p$ and
  \begin{equation}\label{eq:character_series}
    \widehat{\chi}_S(T) = \sum_{(k,l)\in \Z^2} S(\langle T, \varphi_p^{(0)}\rangle)(m(p^k,p^l))\chi(m(p^k,p^l)),
  \end{equation}
  $S$ being the classical Satake map.
%\item[ii)]
%  If $p$ divides $|D|$ and $L_p'/L_p\cong A_p^t\oplus A_p^1$, any $\C$-algebra homomorphism $\xi:\calH^+(\calQ_p//\calK_p, \omega_p)\rightarrow \C$ is of the form
%  \begin{equation}\label{eq:hecke_algebra_hom_ncopr}
%    T\mapsto \xi(T) = \widehat{\chi}_\calS(T)=\sum_{(k,l)\in \Z^2} \langle\calS(T)(m(p^k,p^l)), \varphi_p^{(0)}\rangle)\chi(m(p^k,p^l)),
%  \end{equation}
%  where $\calS$ is the Satake map \eqref{def:satake_map}.
\item[ii)]
  If $p$ divides $|D|$, any $\C$-algebra homomorphism  $\xi:\calH^+(\calQ_p//\calK_p, \omega_p)\rightarrow \C$ is of the form
  \begin{equation}\label{eq:hecke_algebra_hom_ncopr_nsquare}
    \begin{split}
      T\mapsto \xi(T) = \widehat{\chi}_\calS(T)&=\sum_{(k,l)\in \Z^2} \langle (I_{\chi_{D_p}}\circ\calS(T))(m(p^k,p^l)), \varphi_p^{(0)}\rangle)\chi(m(p^k,p^l)) \\
 %     &=
%      \begin{cases}
%        \sum_{(k,l)\in \Z^2} \langle \calS(T)(m(p^k,p^l)),\varphi_p^{(0)}\rangle\chi_{D_p}(m(p^k,p^l))\chi(m(p^k,p^l)), & L_p'/L_p \cong A_p^t,\\
%        \sum_{(k,l)\in \Z^2} \langle\calS(T)(m(p^k,p^l)), \varphi_p^{(0)}\rangle)\chi(m(p^k,p^l)), & L_p'/L_p \cong A_p^t\oplus A_p^1
%      \end{cases} \\
      & = \sum_{(k,l)\in \Z^2} \langle\calS(T)(m(p^k,p^l)), \varphi_p^{(0)}\rangle)\chi(m(p^k,p^l)),
    \end{split}
    \end{equation}
  \end{enumerate}
  where $\calS$ is the Satake map given in \cite{St3}, (4.15),  $I_{\chi_{D_p}}$ is the isomorphism in \cite{St3}, (4.13) and $\chi$ is again an unramified character of $\calM_p$.
  %(observe the noramlizations of the chosen Haar measures in Section \ref{sec:notation}).
  \end{lemma}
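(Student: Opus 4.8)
The plan is to derive both statements from the Satake isomorphism recalled in \eqref{def:satake_map} and \eqref{def:satake_map_coprime}, reducing the classification of $\C$-algebra homomorphisms on the spherical Hecke algebra to the much simpler classification of characters of the torus Hecke algebra attached to $\calM_p$. Throughout I would use that both $\calH(\calQ_p//\calK_p,\omega_p)$ and $\calH^+(\calQ_p//\calK_p,\omega_p)$ are commutative, being isomorphic under the Satake map to a subalgebra of a commutative group algebra.

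I would first treat the coprime case (i). Here the classical Satake map $S$ identifies $\calH(\calQ_p//\calK_p,\omega_p)$ with the Weyl-invariant subalgebra $\calH(\calM_p//\calD_p)^W$. Since $\calM_p$ consists of diagonal matrices with square determinant and $\calD_p=\calM_p\cap\calK_p$, the quotient $\calM_p/\calD_p$ is the rank-two lattice $\{(k,l)\in\Z^2 : k+l\in 2\Z\}$ via $m(p^k,p^l)\mapsto(k,l)$, and $\calH(\calM_p//\calD_p)$ is canonically its group algebra, a Laurent polynomial ring on which $W\cong\Z/2\Z$ acts by $(k,l)\mapsto(l,k)$. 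A $\C$-algebra homomorphism to $\C$ out of this group algebra is exactly a character of the lattice, equivalently an unramified character $\chi$ of $\calM_p$ (trivial on $\calD_p$); the associated homomorphism sends $f\mapsto\sum_{m}f(m)\chi(m)$, which is precisely $\widehat{\chi}_S$ after composing with $S$. That $\chi\mapsto\widehat{\chi}_S$ is an algebra homomorphism follows from multiplicativity of $S$ together with the fact that, $\chi$ being unramified, the pairing $\langle\,\cdot\,,\varphi_p^{(0)}\rangle$ is constant on $\calD_p$-cosets and hence well defined on the Satake image. For the converse, given $\xi$ I would restrict it through $S$ to $\calH(\calM_p//\calD_p)^W$; since the full group algebra is integral (module-finite) over its $W$-invariants, every $\C$-algebra homomorphism of the invariants lifts to a homomorphism of $\calH(\calM_p//\calD_p)$, i.e.\ to a character $\chi$ of $\calM_p$, and any two lifts differ by $W$. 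This yields existence of $\chi$, unique up to the Weyl action, which is the asserted parametrization (the phrase ``uniquely determined'' being read up to $W$).

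For the ramified case (ii) the strategy is identical, but now the Satake map $\calS$ of \eqref{def:satake_map} takes values in the $\End(S_{L_p}^{N(\Z_p)})$-valued Hecke algebra and carries a twist by the Weil representation. The decisive simplification comes from the anisotropy of $D_p$: since $\omega_p(n(b))\varphi_p^{(\mu)}=\psi_p(bq(\mu))\varphi_p^{(\mu)}$ and $q(\mu)=0$ only for $\mu=0$, the $N(\Z_p)$-fixed space $S_{L_p}^{N(\Z_p)}$ is the line $\C\varphi_p^{(0)}$. Hence pairing against $\varphi_p^{(0)}$ turns the $\End$-valued algebra into a genuine scalar-valued torus Hecke algebra, and the isomorphism $I_{\chi_{D_p}}$ of \cite{St3}, (4.13) untwists the Weil action into the quadratic character $\chi_{D_p}$. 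The computation in Lemma \ref{lem:weil_repr_K_0_p}, namely $\omega_p\kabcd\varphi_p^{(0)}=\chi_{D_p}(a)\varphi_p^{(0)}$, is exactly what makes this untwisting explicit and shows that $I_{\chi_{D_p}}$ fixes the $\varphi_p^{(0)}$-component; this is what permits dropping $I_{\chi_{D_p}}$ in passing from the first to the second line of \eqref{eq:hecke_algebra_hom_ncopr_nsquare}. With $S_{L_p}^{N(\Z_p)}$ reduced to a line, the group-algebra argument of part (i) applies and produces the unramified character $\chi$.

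The main obstacle I anticipate is in part (ii): tracking the Weil-representation twist through $\calS$ and confirming that, after restriction to the one-dimensional $N(\Z_p)$-invariants, the image really is the $W$-invariants of the group algebra of $\calM_p/\calD_p$ rather than a larger or skewed object. Concretely, one must check that $I_{\chi_{D_p}}\circ\calS$ is multiplicative and that the quadratic character $\chi_{D_p}$ interacts correctly with the lattice structure, so that homomorphisms still correspond bijectively (up to $W$) to unramified characters of $\calM_p$. The anisotropy hypothesis, via Lemma \ref{lem:weil_repr_K_0_p}, is precisely the input that collapses this bookkeeping to the coprime computation.
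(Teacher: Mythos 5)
Your proposal is correct and follows essentially the same route as the paper: both reduce, via pairing against $\varphi_p^{(0)}$ and the Satake transform, to the torus group algebra $\C[\calM_p/\calD_p]$ and classify $\C$-algebra homomorphisms by unramified characters of $\calM_p$, with anisotropy of $D_p$ collapsing $S_{L_p}^{N(\Z_p)}$ to the line $\C\varphi_p^{(0)}$ in the ramified case. The only difference is one of packaging: where you re-derive the key ingredients by hand (lifting homomorphisms across the integral extension $\C[\calM_p/\calD_p]\supset \C[\calM_p/\calD_p]^W$ with uniqueness up to the Weyl action, and the untwisting by $I_{\chi_{D_p}}$ in case (ii)), the paper simply cites Cartier's Corollary 4.2 for part (i) and imports the structural results of \cite{St3} (Theorem 4.8, ii) and Theorem 4.10) for part (ii), so the points you flag as obstacles are precisely the content of those cited results.
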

\begin{proof}
First, recall that $\widehat{\chi}_S$ and $\widehat{\chi}_{\calS}$ are well defined since $S(\langle T,\varphi_p^{(0)}\rangle)$ and $\langle \calS(T),\varphi_p^{(0)}\rangle$ have finite support on $\calM_p$. 
  
i)   As $\omega_p$ is trivial in this case, $f \mapsto \langle f, \varphi_p^{(0)}\rangle$ is an isomorphism of the Hecke algebras $\calH(\calQ_p//\calK_p, \omega_p)$ and $\calH(\calQ_p//\calK_p)$. From \cite{Ca}, Corollary 4.2, we know that any algebra homomorphism of $\calH(\calQ_p//\calK_p)$ is given by
  \[
  g\mapsto \int_{t\in \calM_p}S(g)(t)\chi(t)dt,
  \]
  where $\chi$ is an unramified character of $\calM_p$. Since $S(g)$ is  bi-invariant under $\calD_p$ and $\chi$ unramified, we obtain the above stated term.
  (note that we adopted the normalisation of the Haar measures in \cite{Ca}, see Section \ref{sec:notation}).
  
  ii)  The proof in i) essentially relies on the fact that any $\C$-algebra homomorphism $\xi$ of the group algebra $\C[\calM_p/\calD_p]$ can be written in terms of a uniquely determined unramified character $\chi$ of $\calM_p$ by
  \[
  \xi(T)= \sum_{m\in \calM_p/\calD_p}T(m)\chi(m).
  \]
  By \cite{St3}, Theorem 4.10, Theorem 4.8, ii), and the remark directly after its proof, we know  that
  \[
  T\mapsto \langle I_{\chi_{D_p}}\circ \calS(T), \varphi_p^{(0)}\rangle
  \]
  maps $\calH^+(\calQ_p//\calK_p, \omega_p)$ isomorphically to a subalgebra of $\C[\calM_p/\calD_p]^W$ inducing the claimed form of $\xi$. %Note that if $L_p'/L_p\cong A_p^t\oplus A_p^1$, $\chi_{D_p}$ is the trivial character. In this case $\xi(T)$ takes the stated simpler form since $\calS(T)$ and $\chi$ are bi-invariant under $\calD_p$. 
  \end{proof}
%Comment: As was noted above, we have $\calH(\calM_p//\calD_p)\cong \C[\calM_p/\calD_p]$. The $\C$-algebra homomorphisms of the group algebra are well known and determined in \cite{De}, p. 207: Let $\xi$ be such an algebra homomomorphism. Then $\x_i$ determines a character $\chi$ of $\calM_p/\calD_p$ satisfying $\chi(m(p^k,p^l)) = \xi(\mathbbm{1}_{\calD_p m(p^k,p^l)\calD_p)$ such that for $T=\sum_{m\in calM_p}T(m)\delta_m\in \C[\calM_p/\calD_p]$
%\begin{align*}
%\xi(T) = \sum_{m\in \calM_p/\calD_p}T(m)\chi(m)
%\end{align*}
%. Since $\xi(\mathbbm{1}_{\calD_p}) = 1$, the character $\chi$ is unramified. Now, the algebras $\calH(\calQ,\calK_p,\omega_p)$ and $\C[\calM_p/\calD_p]^W$ are isomorphic via the classical Satake isomorphism: Thus for any $T\in\calH(\calQ,\calK_p)$ the map $m\mapsto S(T)(m)$ lies in $\C[\calM_p/\calD_p]^W$. Now, let $\psi$ be any $\C$-algebra homomorphism of \calH(\Q_p//\calK_p,\omega_p)$. Then we can assign to $\psi$ a unique $\C$-algebra homomorphism $\psi_S$ of $\C[\calD_p]^W$ by setting $\psi_S(S(T)):=\psi(T)$ for $T\in\calH(\Q_p//\calK_p,\omega_p)$. Since the Satake map is an isomorphism, this is well defined. By the remarks before we may conclude that there exists a unique unramified character $chi$ such that
%\begin{align*}
% \psi(T) = \psi_S(S(T)) = \sum_{m\in \calM_p/\calD_p}S(T)(m)\chi(m)
%\end{align*}
Note that each unramified character $\chi$ of $\calM_p$ is of the form $\chi(m(t_1,t_2))= \chi_1(t_1)\chi_2(t_2)$, where $\chi_i$ is an umramified character of $\Q_p^\times$, that is, $\chi_i$ is trivial on $\Z_p$.

Attached to an  unramified character $\chi$ of $\calM_p$ we now introduce  a operator valued  map on $\calQ_p$. The scalar valued version is part of the classical zonal spherical function associated to $\chi$. (cf. e. g. \cite{Ca}, p. 150): %For reasons which will become apparent in the next Lemma, we distinguish the cases $(p,|D|)=1$ and $p\mid |D|$.
Let
\begin{equation}\label{def:modulus_char}
  \begin{split}
&  \phi_\chi: \calQ_p\rightarrow \End(S_{L_p}),\quad\\
&  \begin{cases}
 g= k_1mk_2 \mapsto \phi_\chi(k_1mk_2) = \omega_p(k_1)\phi_\chi(m)\omega_p(k_2), & k_1,k_2\in \calK_p, m\in \calM_p\\ 
  g=mnk\mapsto \left(\phi_\chi(mnk)\varphi_p^{(\gamma_p)}= (\chi\delta^{\frac{1}{2}})(m)\varphi_p^{(\gamma_p)}\right), & n\in N(\Z_p), m\in \calM_p, k\in \calK_p,
   \end{cases}
    \end{split}
  \end{equation}
where  $\delta(m(t_1,t_2))=\left|\frac{t_1}{t_2}\right|_p$ is the modulus character.
%Then the zonal spherical function $\omega_\chi$ on $\calQ_p$ is defined by
%\begin{equation}\label{def:zonal_spherical_func}
%  \omega_\chi: \calQ_p\rightarrow \C, \quad g\mapsto \omega_\chi(g)=\int_{\calK_p}\phi_\chi(kg)dk,
%\end{equation}
%where $dk$ is the Haar measure on $\calK_p$ normalized by $\int_{\calK_p} dk =1$.
%It follows from its definition  \eqref{def:zonal_spherical_func} and \eqref{def:modulus_char} that $\omega_\chi$ is bi-invariant under $\calK_p$.
%Comment: From the left by the definition of $\phi$. From the right: subsition k\mapsto k'^{-1}k$. 
The next lemma is in principle well known for primes $p$ coprime to $|D|$. 

\begin{lemma}\label{lem:zonal_spherical_func_fourier}
  Let $p$ be a prime, $\mu(\calK_0(p))$ the measure of $\calK_0(p)$, 
  $T\in  \calH(\calQ_p//\calK_p,\omega_p)$ and
\[
  \kappa_p = \left(\frac{|D_p|p}{\gamma_p(D_p)^2}+1\right)\mu(\calK_0(p)).
\]
  Then the following identities hold:
\begin{equation}\label{eq:fourier_tramsform_coprime}
  \int_{\calQ_p}\langle T(g),\varphi_p^{(0)}\rangle \langle \phi_\chi(g)_{|_{S_{L_p}^{N(\Z_p)}}}, \varphi_p^{(0)}\rangle dg =
  \widehat{\chi}_S(T)
\end{equation}
if $p$ is coprime to $|D|$.
\begin{equation}\label{eq:fourier_transform_ncopr}
 \frac{1}{\kappa_p}\int_{\calQ_p}\langle T(g)_{|_{S_{L_p}^{N(\Z_p)}}},\varphi_p^{(0)}\rangle\langle \phi_\chi(g)_{|_{S_{L_p}^{N(\Z_p)}}}, \varphi_p^{(0)}\rangle dg = \widehat{\chi}_\calS(T)
\end{equation}
if $p$ is a divisor of  $|D|$.
\end{lemma}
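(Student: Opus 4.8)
The plan is to unfold the left-hand integral along the Iwasawa decomposition $\calQ_p=\calM_pN(\Q_p)\calK_p$ and to recognise the integrations over $N(\Q_p)$ and $\calM_p$ as exactly the Satake map followed by evaluation against the unramified character $\chi$, the leftover integration over the compact part $\calK_p$ producing the normalising constant. First I would reduce to the case that $T$ is supported on a single double coset $\calK_pm(p^a,p^b)\calK_p$: both sides of \eqref{eq:fourier_tramsform_coprime} and \eqref{eq:fourier_transform_ncopr} are $\C$-linear in $T$, and such operators span $\calH(\calQ_p//\calK_p,\omega_p)$ (respectively $\calH^+(\calQ_p//\calK_p,\omega_p)$) by the generator description recalled from \cite{St3}. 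With the Haar normalisation fixed in Section \ref{sec:notation} I would then decompose $dg$ according to $g=mnk$, the power of the modulus character $\delta$ in the Jacobian being chosen so as to cancel against the factors $\delta^{1/2}$ occurring both in the Satake map \eqref{def:satake_map} and in the definition \eqref{def:modulus_char} of $\phi_\chi$.

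The core step is the unfolding itself. Writing $g=mnk$ and using the right $\calK_p$-equivariance $T(gk)=T(g)\omega_p(k)$ together with $\phi_\chi(gk)=\phi_\chi(g)\omega_p(k)$ and the Borel behaviour $\phi_\chi(mn)\varphi_p^{(0)}=(\chi\delta^{1/2})(m)\varphi_p^{(0)}$ for $n\in N(\Z_p)$, the integral factors into three pieces. Integrating first over $n\in N(\Q_p)$ and splitting $N(\Q_p)$ into $N(\Z_p)$-cosets turns the $n$-integral of $\langle T(mn)\varphi_p^{(0)},\varphi_p^{(0)}\rangle$ into $\delta(m)^{-1/2}$ times $\langle\calS(T)(m),\varphi_p^{(0)}\rangle$ (respectively $S(\langle T,\varphi_p^{(0)}\rangle)(m)$); the factor $\delta(m)^{1/2}$ carried by $\phi_\chi$ then cancels the modulus, and since everything is $\calD_p$-bi-invariant and $\chi$ is unramified the $\calM_p$-integral collapses to the finite series $\sum_{(k,l)\in\Z^2}\langle\calS(T)(m(p^k,p^l)),\varphi_p^{(0)}\rangle\chi(m(p^k,p^l))$, which is precisely $\widehat{\chi}_\calS(T)$ (respectively $\widehat{\chi}_S(T)$). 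What survives is the integration over $k\in\calK_p$ of the product of the two Weil-representation matrix coefficients $\langle\omega_p(k)\varphi_p^{(0)},\varphi_p^{(0)}\rangle$, a constant independent of $T$ and $\chi$: this is what must be shown to equal $\kappa_p$ in the ramified case and $1$ in the coprime case.

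Evaluating this compact integral is the main obstacle, and it is where anisotropy of $D_p$ is decisive. Since $D_p$ is anisotropic one has $S_{L_p}^{N(\Z_p)}=\C\varphi_p^{(0)}$, and by Lemma \ref{lem:weil_repr_K_0_p} the scalar $\langle\omega_p(k)\varphi_p^{(0)},\varphi_p^{(0)}\rangle$ transforms under $\calK_0(p)$ on both sides through $\chi_{D_p}$, whence the whole integrand is $\calK_0(p)$-bi-invariant. I would therefore replace the $\calK_p$-integral by the finite sum over the $p+1$ cosets in $\calK_p/\calK_0(p)$, represented by the identity and by $wn(j)$ for $0\le j\le p-1$, each coset carrying measure $\mu(\calK_0(p))$. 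The coset of the identity contributes the summand $\mu(\calK_0(p))$ (the ``$+1$''); on the remaining $w$-twisted cosets I would insert the explicit formula for $\omega_p(w)$ from \eqref{eq:weil_rep_explicit} and evaluate the resulting Gauss-type sum over the anisotropic $D_p$, the local Weil index $\gamma_p(D_p)$ and the order $|D_p|$ entering precisely here. Assembling the $\calK_0(p)$-coset contributions into the closed form $\kappa_p$ is the delicate bookkeeping, and getting the exact dependence on $\gamma_p(D_p)$ and $|D_p|$ right is the crux; in the coprime case $\omega_p$ is trivial and the same computation degenerates to $\mu(\calK_p)=1$, recovering \eqref{eq:fourier_tramsform_coprime}. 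Conceptually, Lemma \ref{lem:algebra_hom} guarantees in advance that $T\mapsto\int_{\calQ_p}\langle T,\varphi_p^{(0)}\rangle\langle\phi_\chi,\varphi_p^{(0)}\rangle\,dg$ must agree with $\widehat{\chi}_\calS$ up to a scalar, so a safer route to pin down $\kappa_p$ is to test the identity on the unit of the Hecke algebra, i.e.\ the generator indexed by $(0,0)$, for which the integral localises to $\calK_p$ and reduces directly to the compact integral above.
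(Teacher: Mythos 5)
Your proposal follows essentially the same route as the paper's proof: the Iwasawa/Cartan unfolding with the $\delta^{1/2}$-cancellation recovering $\widehat{\chi}_S(T)$ resp.\ $\widehat{\chi}_\calS(T)$, the decomposition of $\int_{\calK_p}$ into $p+1$ cosets of $\calK_0(p)$ (the paper uses the representatives $\{T^jw^{-1}\}\cup\{1_2\}$ from \cite{KL}, Lemma 13.1, equivalent to your $\{1\}\cup\{wn(j)\}$), the cancellation of the quadratic character $\chi_{D_p}$ from Lemma \ref{lem:weil_repr_K_0_p} in the product of the two pairings, and the evaluation of $\omega_p(w)$ via \eqref{eq:weil_rep_explicit} followed by the Gauss-sum over the anisotropic $D_p$ that produces exactly $\kappa_p$. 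Two small caveats: the $\calK_p$-integral is not literally a $T$-independent product of matrix coefficients $\langle\omega_p(k)\varphi_p^{(0)},\varphi_p^{(0)}\rangle$ --- the terms $\langle T(mn)\varphi_p^{(\gamma_p)},\varphi_p^{(0)}\rangle$ with $\gamma_p\neq 0$ only disappear \emph{after} the $j$-summation kills them by anisotropy, which your Gauss-sum step does supply --- and your ``safer route'' via Lemma \ref{lem:algebra_hom} is not available as stated, since that lemma classifies algebra homomorphisms and the functional $T\mapsto\int_{\calQ_p}\langle T,\varphi_p^{(0)}\rangle\langle\phi_\chi,\varphi_p^{(0)}\rangle\,dg$ is not known a priori to be (a multiple of) one; proving its multiplicativity in the $\omega_p$-twisted setting is essentially the content of the lemma itself.
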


\begin{proof}
If $(p,|D|)=1$, the algebra $\calH(\calQ_p//\calK_p,\omega_p)$ is isomorphic to the classical algebra $\calH(\calQ_p//\calK_p)$. In particular,  any $T\in \calH(\calQ_p//\calK_p,\omega_p)$ is bi-invariant with respect to $\calK_p$.
Therefore, the  result can be proved as in \cite{McD}, p. 46, or \cite{Ca}, p. 150. Following the proof of either of the cited sources, we end up with
  \begin{equation}\label{eq:comp_fourier_tramsform}
    \int_{m\in \calM_p}\chi(m)\delta(m)^{1/2}\int_{N(\Q_p)}\langle T(mn),\varphi_p^{(0)}\rangle dn.
  \end{equation}
  Since $S(\langle T,\varphi_p^{(0)}\rangle)$ is bi-invariant under $\calD_p$ and $\chi$ is unramified, this is equal to
  \[
  \sum_{(k,l)\in \Z^2}\chi(m(p^k,p^l))S(\langle T,\varphi_p^{(0)}\rangle)(m(p^k,p^l)).
  \]
  For $p\mid |D|$, the computations are more  involved since the integrand is not bi-invariant under $\calK_p$. However, it is still possible to remedy the absence of the  bi-invariance. To this end, we use two facts. First, due to Lemma \ref{lem:weil_repr_K_0_p} we know that the Weil representation $\omega_p(k')$ acts  on $S_{L_p}^{N(\Z_p)}$ by multiplication with the quadratic character $\chi_{D_p}$ for any $k'\in \calK_0(p)$. Secondly, we use Lemma 13.1 of \cite{KL} (whose proof is still valid for the groups $\calK_p$ and $\calK_0(p)$), which provides with $\{T^{j}w^{-1}\; |\; j\in \Z/p\Z\}\cup \{1_2\}$ a set of coset representatives of $\calK_p/\calK_0(p)$ allowing us to calculate $\omega_p$ on $\calK_p$ explicitly. As a consequence, we write the integral $\int_{\calK_p}$ in the form  $\displaystyle \int_{\calK_p/\calK_0(p)}\int_{\calK_0(p)} = \sum_{\calK_p/\calK_0(p)}\int_{\calK_0(p)}$. 
Thus, taking this and the Iwasawa decomposition into account, we have
  \begin{equation}\label{eq:interal_expressions}
    \begin{split}
      &   \int_{\calQ_p}\langle T(g)_{|_{S^{N(\Z_p)}}},\varphi_p^{(0)}\rangle \langle \phi_\chi(g)_{|_{S^{N(\Z_p)}}}, \varphi_p^{(0)}\rangle dg \\
      &= \sum_{(k,l)\in \Z^2}\int_{N(\Q_p)}\int_{\calK_p}\langle T(m(p^k,p^l)n)\omega_p(k)\varphi_p^{(0)},\varphi_p^{(0)}\rangle\langle \phi_\chi(m(p^k,p^l))\omega_p(k)\varphi_p^{(0)},\varphi_p^{(0)} \rangle dk\;dn \\
      & =\sum_{(k,l)\in \Z^2}\int_{N(\Q_p)}\int_{\calK_0(p)}\times \\
      &\langle T(m(p^k,p^l)n)\omega_p(k')\varphi_p^{(0)},\varphi_p^{(0)}\rangle\langle \phi_\chi(m(p^k,p^l))\omega_p(k')\varphi_p^{(0)},\varphi_p^{(0)} \rangle dk'\;dn \\
      &+ \sum_{(k,l)\in \Z^2}\int_{N(\Q_p)}\sum_{j\in \Z/p\Z}\int_{\calK_0(p)}\times \\
      &\langle T(m(p^k,p^l)n)\omega_p(T^{-j}w)\omega_p(k')\varphi_p^{(0)},\varphi_p^{(0)}\rangle\langle \phi_\chi(m(p^k,p^l))\omega_p(T^{-j}w)\omega_p(k')\varphi_p^{(0)},\varphi_p^{(0)} \rangle dk'\;dn.\\
      \end{split}
    \end{equation}
In light of the decomposition \eqref{eq:decomp_K_0_p} of any matrix $k'=\kzxz{a}{b}{c}{d}\in \calK_0(p)$,  we may  replace
  \[
  \int_{\calK_0(p)} \text{ with } \int_{(\Z_p^\times)^2}\int_{\Z_p^\times}\int_{\Z_p}\int_{\Z_p}
  \]
  by Fubini's theorem. %For the sake of clarity we write $\int_{\calK'}$ for the integrals $\int_{D(\Z_p^2)}\int_{N(\Z_p)}\int_{U(p\Z_p)}$.
 % Since the level of $L_p'/L_p$ is $p$, we have in view of \eqref{eq:local_hilbert_symbol} and Theorem \ref{rem:hecke_algebra_mp}, i)
 % \begin{equation}\label{eq:weil_k_0_p}
 %   \omega_p(m(d^{-1})m(ad-bpc,1)n(b)n\_(pcd^{-1}))\varphi_p^{(0)} = \chi_{D_p}(d)\varphi_p^{(0)}
 % \end{equation} and
 % \begin{equation}\label{eq:hecke_Tm}
 %   T(m(p^ks_1,p^ls_2)n)\varphi_p^{(0)} = \chi_{D_p}(s_2)T(m(p^k,p^l)n)\varphi_p^{(0)}. 
 % \end{equation}
  Thus, by Lemma \ref{lem:weil_repr_K_0_p}
  \begin{equation}\label{eq:integral_K_0_p}
    \begin{split}
    &\int_{\calK_0(p)}\langle T(m(p^k,p^l)n)\omega_p(T^{-j}w)\omega_p(k')\varphi_p^{(0)},\varphi_p^{(0)}\rangle\langle \phi_\chi(m(p^k,p^l))\omega_p(T^{-j}w)\omega_p(k')\varphi_p^{(0)},\varphi_p^{(0)} \rangle dk \\
    &= \int_{(\Z_p^\times)^2}\int_{\Z_p^\times}\int_{\Z_p}\int_{\Z_p}\times\\
&   \langle T(m(p^k,p^l)n)\omega_p(T^{-j}w)\chi_{D_p}(a)\varphi_p^{(0)},\varphi_p^{(0)}\rangle\langle \phi_\chi(m(p^k,p^l))\omega_p(T^{-j}w)\chi_{D_p}(a)\varphi_p^{(0)},\varphi_p^{(0)}\rangle dc\;db\:da\;dr.
      \end{split}
    \end{equation}
  As quadratic character, the $\chi_{D_p}(a)$ in both scalar products $\langle\cdot,\cdot\rangle$ cancel out and we obtain for the right-hand side of \eqref{eq:integral_K_0_p}
  \[
  \mu(\calK_0(p))\langle T(m(p^k,p^l)n)\omega_p(T^{-j}w)\varphi_p^{(0)},\varphi_p^{(0)}\rangle\langle \phi_\chi(m(p^k,p^l))\omega_p(T^{-j}w)\varphi_p^{(0)},\varphi_p^{(0)}\rangle
  \]
  %by Fubini's Theorem
 %taking \eqref{eq:weil_k_0_p} and \eqref{eq:hecke_Tm} into account,  the right-hand side of \eqref{eq:interal_expressions} equals
 % \begin{align*}
 %   &\sum_{(k,l)\in \Z^2}\int_{\calD_p}\int_{N(\Q_p)}\int_{M(\Z_p^\times)}\int_{\calK'} \times \\
 %   &\sum_{j\in \Z/p\Z}\chi_{D_p}(s_2)\chi_{D_p}(r)\langle T(m(p^k,p^l)n)\omega_p(T^{-j}w)\varphi_p^{(0)},\varphi_p^{(0)}\rangle \delta(m(p^k,p^l))^{1/2}\chi(m(p^k,p^l))ds\:dn\;dr\;dk\\
 %  &=\sum_{(k,l)\in \Z^2}\int_{\calD_p}\int_{N(\Q_p)}\int_{M(\Z_p^\times)}\int_{\calK'} \times \\
 %   &\sum_{j\in \Z/p\Z}\langle T(m(p^k,p^l)n)\omega_p(T^{-j}w)\varphi_p^{(0)},\varphi_p^{(0)}\rangle \delta(m(p^k,p^l))^{1/2}\chi(m(p^k,p^l))ds\:dn\;dr\;dk. 
 % \end{align*}
 % The last equation results from the transformation
 % \[
 % (m(s_1,s_2), n, m(r^{-1}),k) \mapsto (m(s_1,s_2)m(r^{-1}), n, m(r^{-1})m(s_2^{-1}),k). 
  % \]
  The formulas (2.7) in \cite{St3} for $\omega_p$ allow us to evaluate the sum over $j$ explicitly:
\[
  \omega_p(T^{j}w^{-1})\varphi_p^{(0)} = \frac{|D_p|^{1/2}}{\gamma_p(D_p)}\sum_{\gamma_p\in D_p}e(jq(\gamma_p))\varphi_p^{(\gamma_p)}. 
  \]
  Thus, the definition of $\phi_\chi$  yields
  \begin{align*}
  & \langle T(m(p^k,p^l)n)\omega_p(T^{j}w^{-1})\varphi_p^{(0)},\varphi_p^{(0)}\rangle\langle \phi_\chi(m(p^k,p^l))\omega_p(T^{j}w^{-1})\varphi_p^{(0)},\varphi_p^{(0)}\rangle \\
  &= \frac{|D_p|}{\gamma_p(D_p)^2}\sum_{\gamma_p,\mu_p\in D_p}\psi_p(j(q(\gamma_p)+q(\mu_p)))\langle T(m(p^k,p^l)n)\varphi_p^{(\gamma_p)},\varphi_p^{(0)}\rangle\langle \phi_\chi(m(p^k,p^l))\varphi_p^{(\mu_p)},\varphi_p^{(0)}\rangle \\
  &= \frac{|D_p|}{\gamma_p(D_p)^2}\sum_{\gamma_p\in D_p}\psi_p(j(q(\gamma_p))\langle T(m(p^k,p^l)n)\varphi_p^{(\gamma_p)},\varphi_p^{(0)}\rangle (\delta^{1/2}\chi)(m(p^k,p^l)).
  \end{align*}
By means of the standard Gauss sum identity applied to the sum over $j\in \Z/p\Z$,  we finally find 
\begin{equation}\label{eq:int_K_p}
  \begin{split}
  & \int_{\calK_p}\langle T(m(p^k,p^l)n)\omega_p(k)\varphi_p^{(0)},\varphi_p^{(0)}\rangle\langle \phi_\chi(m(p^k,p^l))\omega_p(k)\varphi_p^{(0)},\varphi_p^{(0)} \rangle dk\\
  & \left(\frac{|D_p|p}{\gamma_p(D_p)^2}+1\right)\mu(\calK_0(p))\langle T(m(p^k,p^l)n)\varphi_p^{(0)},\varphi_p^{(0)}\rangle (\delta^{1/2}\chi)(m(p^k,p^l))
  \end{split}
\end{equation}
and therefore
  %  In light of the assumption $\mu(\calD_p)=1$,
  %and 
 % we may write for the last expression above
 % \begin{align*}
 %   & \mu(\calK_0(p))\left\{\sum_{(k,l)\in \Z^2}\delta(m(p^k,p^l))^{1/2}\chi(m(p%^k,p^l))\times \right.\\
 %   &\int_{N(\Q_p)}\langle T(m(p^k,p^l)n)\sum_{j\in \Z/p\Z}\omega_p(T^{-j}w)\var%phi_p^{(0)},\varphi_p^{(0)}\rangle dn \\
 % & \left. + \sum_{(k,l)\in \Z^2}\delta(m(p^k,p^l))^{1/2}\chi(m(p^k,p^l)))\int_{N(\Q_p)}\langle T(m(p^k,p^l)n)\varphi_p^{(0)},\varphi_p^{(0)}\rangle dn\right\},
 % \end{align*}
 % where $\mu(\calK_0(p))= \frac{1}{p+1}$ (see \cite{KL}, (13.3)). 
  
  \begin{align*}
    &  \int_{\calQ_p}\langle T(g)_{|_{S^{N(\Z_p)}}},\varphi_p^{(0)}\rangle \phi_\chi(g)dg \\
    &= \kappa_p\sum_{(k,l)\in \Z^2}\delta(m(p^k,p^l))^{1/2}\chi(m(p^k,p^l)))\int_{N(\Q_p)}\langle T(m(p^k,p^l)n)_{|_{S^{N(\Z_p)}}},\varphi_p^{(0)}\rangle dn  \\
    & = \kappa_p\sum_{(k,l)\in \Z^2}\langle \calS(T)(m(p^k,p^l),\varphi_p^{(0)}\rangle \chi(m(p^k,p^l))
  \end{align*}
  as $T_{|_{S^{N(\Z_p)}}}$ is right-invariant under $N(\Z_p)$.
% , both integrals of \eqref{eq:comp_fourier_tramsform} reduce to the sums of the assertion.
\end{proof}

%Comment:
%\begin{align*}
%\int_{\calQ_p}\langle f(g),\varphi_p^{(0)}\rangle \omega_\chi(g)dg &=\int_{\calQ_p}\langle f(g),\varphi_p^{(0)}\rangle\phi_\chi(g)dg (substitute g\mapsto k^{-1}g)\\
%& = \int_{\calM_p)}\int_{N(\Q_p)}\int_{\calK_p}\langle f(ntk),\varphi_p^{(0)}\rangle\phi_\chi(ntk) dk dn dt \\
% = \int_{\calM_p}\chi(t)\delta(t)^{1/2}\int_{N(\Q_p)}\langle f(nt),\varphi_p^{(0)}\rangle dn dt\\
% = \sum_{t\in \calM_p}\chi(t)\delta(t)^{1/2}\int_{N(\Q_p)}\langle f(nt)dn.
%\end{align*}
To relate $L(s,F)$ and $\calZ(s,f_F)$, we calculate the integral
\[
\int_{\calQ_p}\langle\nu_s(g)_{|_{S_L^{N(\Z_p)}}},\varphi_p^{(0)}\rangle\langle \phi_\chi(g)_{|_{S_L^{N(\Z_p)}}},\varphi_p^{(0)}\rangle dg
\]
in two different ways. The first one is an analogue of  Lemma 5.2 in \cite{Ar}.
Here for $s\in \C$ we define $\nu_s:\calQ_p\rightarrow \End(S_{L_p})$ by
\begin{equation}\label{eq:nu_s}
  \begin{split}
  & \nu_s(k_1gk_2) = \omega_p(k_1)\nu_s(g)\omega_p(k_2) \text{ for all } k_1,k_2\in \calK_p \text{ and all } g\in \calQ_p, \\
  & \nu_s(m(p^k,p^l))\varphi_p^{(\gamma_p)} = 
  \begin{cases}
    p^{-(k+l)s}\varphi_p^{(p^{l-k}\gamma_p)}, &  (k,l)\in \Lambda_+,\\
    0, &  \text{ otherwise. }
    \end{cases}
  \end{split}
\end{equation}
The proof of the next result makes use of the following observation:
\begin{equation}\label{eq:T_k_l_n_s}
  \begin{split}
    \nu_s(k_1m(p^k,p^l)k_2) &= p^{-s(k+l)}T_{k,l}(k_1m(p^k,p^l)k_2)\\
  %  &= \frac{|D_p|^{1/2}}{e(\sig(D_p)/8)}p^{-s(k+l)}T_{k,l}(k_1m(p^k,p^l)k_2)
    \end{split}
\end{equation}
for all $k_1,k_2\in \calK_p$ and all $m(p^k,p^l)\in \calM_p$.%, where the last equation is due to \eqref{eq:eval_gauss_sum_ncpr} and Milgram's formula.

\begin{lemma}\label{lem:integral_series}
  Let $p$ be a prime,  $\chi$ be an unramified character of $\calM_p$,
\[
  T_{k,l}\in
  \begin{cases}\calH(\calQ_p//\calK_p,\omega_p), & p\nmid |D|,\\
    \calH^+(\calQ_p//\calK_p,\omega_p), & p\mid |D|,
  \end{cases}
\]
%\[
%C(L_p)=\kappa_p\frac{|D_p|^{1/2}}{e(\sig(D_p)/8)}
%\]
  and
\[
B_S(\chi, X)=
\sum_{(k,l)\in \Lambda_+}\widehat{\chi_S}(T_{k,l})X^{k+l} \text{ and }  B_\calS(\chi, X)=
\sum_{(k,l)\in \Lambda_+}\widehat{\chi_\calS}(T_{k,l})X^{k+l}
  \]
  where  $\widehat{\chi_S}$, $\widehat{\chi_\calS}$ are defined in Lemma \ref{lem:algebra_hom}. 
Then
\begin{enumerate}
\item[i)]
    \begin{equation}\label{eq:dirichlet_series_coprime}
    \int_{\calQ_p}\langle \nu_s(g),\varphi_p^{(0)}\rangle\langle\phi_\chi(g),\varphi_p^{(0)}\rangle dg = B(\chi,p^{-s}) 
    \end{equation}
    if $p\nmid |D|$.
  \item[ii)]
    \begin{equation}\label{eq:dirichlet_series_ncoprime}
    \int_{\calQ_p}\langle\nu_s(g)_{|_{S_{L_p}^{N(\Z_p)}}},\varphi_p^{(0)}\rangle\langle \phi_\chi(g)_{|_{S_{L_p}^{N(\Z_p)}}},\varphi_p^{(0)}\rangle dg = \kappa_pB(\chi,p^{-s}) 
    \end{equation}
    if $p\mid |D|$, where $\kappa_p$ is specified in Lemma \ref{lem:zonal_spherical_func_fourier}. 
\end{enumerate}
\end{lemma}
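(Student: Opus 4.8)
The plan is to exploit the pointwise identity \eqref{eq:T_k_l_n_s} in order to reduce each of the two integrals to a sum of the Fourier-type integrals already evaluated in Lemma \ref{lem:zonal_spherical_func_fourier}. First I would record that, by the $\calK_p$-bi-invariance built into the definition \eqref{eq:nu_s} together with the vanishing of $\nu_s(m(p^k,p^l))$ for $(k,l)\notin\Lambda_+$, the support of $\nu_s$ is contained in the cells $\calK_p m(p^k,p^l)\calK_p$ with $(k,l)\in\Lambda_+$ of the Cartan decomposition of $\calQ_p$. On the cell indexed by $(k,l)$, relation \eqref{eq:T_k_l_n_s} gives $\nu_s = p^{-s(k+l)}T_{k,l}$ identically, and the same holds after restricting to $S_{L_p}^{N(\Z_p)}$ in the ramified case $p\mid|D|$, since restriction is linear.

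Next I would break the integral over $\calQ_p$ into the corresponding sum over $(k,l)\in\Lambda_+$. Because $T_{k,l}$ is itself supported precisely on $\calK_p m(p^k,p^l)\calK_p$, each summand
\[
p^{-s(k+l)}\int_{\calK_p m(p^k,p^l)\calK_p}\langle T_{k,l}(g),\varphi_p^{(0)}\rangle\langle\phi_\chi(g),\varphi_p^{(0)}\rangle\,dg
\]
may be rewritten with the domain of integration enlarged back to all of $\calQ_p$, and similarly with the $N(\Z_p)$-invariant restrictions inserted when $p\mid|D|$. Applying Lemma \ref{lem:zonal_spherical_func_fourier} termwise then converts this integral into $\widehat{\chi}_S(T_{k,l})$ via \eqref{eq:fourier_tramsform_coprime} when $p\nmid|D|$, and into $\kappa_p\,\widehat{\chi}_\calS(T_{k,l})$ via \eqref{eq:fourier_transform_ncopr} when $p\mid|D|$. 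Summing over $\Lambda_+$ and recalling the definitions of $B_S(\chi,X)$ and $B_\calS(\chi,X)$ with $X=p^{-s}$ yields the two claimed identities, the factor $\kappa_p$ pulling out of the sum in case ii).

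The only genuinely delicate point is the interchange of the sum over $(k,l)$ with the integral, which I would justify by working first in the region of $s$ where $B_S(\chi,p^{-s})$ (respectively $B_\calS(\chi,p^{-s})$) converges absolutely; there $p^{-\re(s)}$ is small, the supports of the $T_{k,l}$ are mutually disjoint, and dominated convergence legitimizes the rearrangement. The resulting equality of holomorphic functions then propagates to the full range by analytic continuation. Everything else is bookkeeping: the structural inputs — the Cartan decomposition of $\calQ_p$, the identity \eqref{eq:T_k_l_n_s}, and the evaluations in Lemma \ref{lem:zonal_spherical_func_fourier} — carry the entire argument, so I anticipate no conceptual obstacle beyond tracking the restriction to $S_{L_p}^{N(\Z_p)}$ carefully in the ramified case.
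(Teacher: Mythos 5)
Your proposal is correct and takes essentially the same route as the paper's own proof: decompose the integral over $\calQ_p$ via the Cartan decomposition into the double cosets $\calK_p m(p^k,p^l)\calK_p$, $(k,l)\in\Lambda_+$, supporting $\nu_s$, replace $\nu_s$ by $p^{-s(k+l)}T_{k,l}$ using \eqref{eq:T_k_l_n_s}, re-extend each summand to $\calQ_p$ by the support of $T_{k,l}$, and apply Lemma \ref{lem:zonal_spherical_func_fourier} termwise. Your explicit justification of the sum--integral interchange (absolute convergence for $\re(s)$ large, then continuation) is a point the paper handles only in the remark following the lemma, but it does not alter the argument.
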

\begin{proof}
  For $p$ coprime to $|D|$ the proof is essentially the same as the one of Lemma 5.2 in \cite{Ar}. One has just to replace the term $\nu_s$ with our corresponding $\nu_s$ and $\varphi_\alpha$ with $T_{k,l}$.

  Again, the proof for $p\mid |D|$ is more complicated. It uses the same ideas as the  ones in the proof of Lemma \ref{lem:zonal_spherical_func_fourier} and proceeds similar to the proof of \cite{Ar}, Lemma 5.2. % For the sake of readability, we we use the symbol in \eqref{eq:gauss_sum_d} for the occuring Gauss sums in our calculations.\newline
  By definition, the support of $\nu_s$ is $\bigcup_{(k,l)\in \Lambda_+}\calK_pm(p^k,p^l)\calK_p$.   Using the Cartan decomposition, %and the fact that $\omega_p$ is unitary with respect to $\langle\cdot,\cdot\rangle$,
  we then have
\begin{equation}\label{eq:series}
  \begin{split}
    &   \int_{\calQ_p}\langle\nu_s(g)_{|_{S_{L_p}^{N(\Z_p)}}},\varphi_p^{(0)}\rangle\langle \phi_\chi(g)_{|_{S_{L_p}^{N(\Z_p)}}},\varphi_p^{(0)}\rangle dg =\\
    &  \sum_{(k,l)\in \Lambda_+}\int_{\calK_p}\int_{\calK_p} \langle \omega_p(k_1)\nu_s(m(p^k,p^l))\omega_p(k_2)\varphi_p^{(0)},\varphi_p^{(0)}\rangle \langle \omega_p(k_1)\phi_{\chi}(m(p^k,p^l))\omega_p(k_2),\varphi_p^{(0)}\rangle dk_1\;dk_2\\
 %   &=\sum_{(k,l)\in \Z^2}\int_{\calK_p}\int_{\calK_p}\times \\
 %   &\langle\nu_s(m(p^k,p^l))\omega_p(k_2)\varphi_p^{(0)},\omega_p(k_1^{-1}\varphi_p^{(0)}\rangle \langle \phi_{\chi}(m(p^k,p^l))\omega_p(k_2),\omega_p(k_1^{-1})\varphi_p^{(0)}\rangle dk_1\;dk_2.\\ 
  \end{split}
\end{equation}
Bearing \eqref{eq:T_k_l_n_s} in mind and the fact that the operator $T_{k,l}$ is supported on $\calK_pm(p^k,p^l)\calK_p$, we find that the last expression in \eqref{eq:series} equals
\begin{align*}
 & \sum_{(k,l)\in \Lambda_+}p^{-s(k+l)}\frac{g_{p^l}(D_p)}{g(D_p)}\times \\
  &\int_{\calK_p}\int_{\calK_p} \langle \omega_p(k_1)T_{k,l}(m(p^k,p^l)\omega_p(k_2)\varphi_p^{(0)},\varphi_p^{(0)}\rangle \langle \omega_p(k_1)\phi_{\chi}(m(p^k,p^l))\omega_p(k_2),\varphi_p^{(0)}\rangle dk_1\;dk_2\\
  &= \sum_{(k,l)\in \Lambda_+}p^{-s(k+l)}\sum_{(i,j)\in \Z^2} \times \\
  &  \int_{\calK_p}\int_{\calK_p} \langle \omega_p(k_1)T_{k,l}(m(p^i,p^j)\omega_p(k_2)\varphi_p^{(0)},\varphi_p^{(0)}\rangle \langle \omega_p(k_1)\phi_{\chi}(m(p^i,p^j))\omega_p(k_2),\varphi_p^{(0)}\rangle dk_1\;dk_2\\
  &= \sum_{(k,l)\in \Lambda_+}p^{-s(k+l)} \int_{\calQ_p}\langle T_{k,l}(g)_{|_{S_{L_p}^{N(\Z_p)}}},\varphi_p^{(0)}\rangle\langle\phi_{\chi}(g)_{|_{S_{L_p}^{N(\Z_p)}}},\varphi_p^{(0)}\rangle dg.
\end{align*}
In light of Lemma \ref{lem:zonal_spherical_func_fourier} we obtain the result.
\end{proof}

Observe that in view of the following lemma, it is guaranteed that the  Dirichlet series $B(\chi, p^{-s})$ converges in both considered instances  normally for all $s\in \C$ with $\re(s)$ sufficiently large and represents in the region of convergence a holomorphic function. The equations \eqref{eq:dirichlet_series_coprime} and \eqref{eq:dirichlet_series_ncoprime} are valid for these $s\in \C$ and each integral on the left-hand side of these equations is consequently a holomorphic function in $s$ on the before mentioned region (in fact, they are valid for all $s$ where the right-hand side of \eqref{eq:rel_int_local_L_func_cpr} is defined).  
%Comment: the coefficient $\chi_S(T_{k,l}) = \calO(p^{k+l})$. From the proof of Theorem \ref{thm:satake_isom} we know the support of $S(T_{k,l}) \subset \bigcup_{\nu=k}^l\calK_pm(p^\nu, p^{k+l-\nu})\calK_p$. More explicitely,
%\begin{align*}
%\sum_{\nu=k}^l\calS(T_{k,l})(m(p^\nu,p^{k+l-\nu})) = \sum_{n=k}^l|U(\nu)|T(m(p^k,p^l).
%\end{align*}
%Since $|U(\nu)|= p^{\nu-k}$, the formula for the geometric sum yields the result.
%Further, notice that any unramified character on $\Q_p$ is of the form $|\cdot|_p^z$, where $z$ a fixed complex number (see e. g. Y. Tian, Notes on Tate's thesis, p. 4).

The next Lemma is a variant of a Theorem which is due to Murase and Sugano (see \cite{Ar}, Theorem 5.3). It connects the series $B(\chi,p^{-s})$ with the rational expression 
\[
\frac{1+\chi_1(p)\chi_2(p)p^{-2s}}{(1-\chi_1(p^2)p^{-2s})(1-\chi_2(p^2)p^{-2s})}
\]
attached to an unramified character $\chi=(\chi_1,\chi_2)$ of $\calM_p$.

\begin{lemma}\label{lem:int_repr_series}
  Let $p$ be a prime, $\chi = (\chi_1,\chi_2)$ be an unramified character of $\calM_p$. % such that $\chi_1\chi_2=\psi_p$ where $\psi_p$ is the $p$-part of a Hecke character induced by a Dirichlet character.
  Then
 
  \begin{equation}\label{eq:rel_int_local_L_func_cpr}
    \begin{split}
     %    \int_{\calQ_p}\nu_{s+\frac{1}{2}}(g)\omega_\chi(g)dg
      &\int_{\calQ_p}\langle\nu_{s+\frac{1}{2}}(g)_{|_{S_{L_p}^{N(\Z_p)}}},\varphi_p^{(0)}\rangle\langle \phi_\chi(g)_{|_{S_{L_p}^{N(\Z_p)}}},\varphi_p^{(0)}\rangle dg\\
      & = \begin{cases}\kappa_p\frac{1+\chi_1(p)\chi_2(p)p^{-2s}}{(1-\chi_1(p^2)p^{-2s})(1-\chi_2(p^2)p^{-2s})}, & p| |D|,\\
        \frac{1+\chi_1(p)\chi_2(p)p^{-2s}}{(1-\chi_1(p^2)p^{-2s})(1-\chi_2(p^2)p^{-2s})}, & p\nmid |D|.
        \end{cases}
    \end{split}
    \end{equation}
%  \begin{equation}\label{eq:rel_int_local_L_func_ncpr}
%    \int_{\calQ_p}\nu_{s}(g)\phi_\chi(g)dg = \frac{1+\chi_1(p)\chi_2(p)p^{-2s-1}}{(1-\chi_1(p^2)p^{-2(s+1)})(1-\chi_2(p^2)p^{-2(s+1)})}
%\end{equation}
%    if  $p\mid |D|$. 
  \end{lemma}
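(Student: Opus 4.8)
The plan is to combine the two previous lemmas to reduce everything to a purely algebraic computation of the generating series $B_\calS(\chi,X)$ and $B_S(\chi,X)$. By Lemma~\ref{lem:integral_series}, applied with $s$ replaced by $s+\tfrac12$, the integral on the left-hand side equals $\kappa_p B_\calS(\chi,p^{-(s+\frac12)})$ (respectively $B_S(\chi,p^{-(s+\frac12)})$ in the coprime case). So the entire content of the lemma is the identity
\[
B(\chi, p^{-(s+\frac12)}) = \frac{1+\chi_1(p)\chi_2(p)p^{-2s}}{(1-\chi_1(p^2)p^{-2s})(1-\chi_2(p^2)p^{-2s})}.
\]
Thus I would first unwind the definition of $B$ as a sum over $(k,l)\in\Lambda_+$ of $\widehat{\chi}(T_{k,l})X^{k+l}$, and reduce the whole problem to evaluating the Satake transform values $\widehat{\chi}(T_{k,l})$ explicitly.

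Next I would compute $\widehat{\chi}(T_{k,l})$ using Lemma~\ref{lem:algebra_hom}: by \eqref{eq:character_series} or \eqref{eq:hecke_algebra_hom_ncopr_nsquare}, this is a sum over $(i,j)\in\Z^2$ of $\langle\calS(T_{k,l})(m(p^i,p^j)),\varphi_p^{(0)}\rangle\,\chi(m(p^i,p^j))$. The key input is the explicit action of the generators $T_{k,l}$ from \eqref{def:T_k_l} and \eqref{def:T_k}, together with the explicit Satake map \eqref{def:satake_map}. Here I would carefully track the coset sum $\sum_{n\in N(\Q_p)/N(\Z_p)}$ and the modulus factor $\delta(m)^{1/2}$, using the anisotropy of $D_p$, which is what forces the simple form of the generators (each $T_{k,l}$ sends $\varphi_p^{(\mu_p)}$ to $\varphi_p^{(0)}$). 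Writing $\chi(m(p^i,p^j))=\chi_1(p)^i\chi_2(p)^j$, each $\widehat{\chi}(T_{k,l})$ should come out as an explicit monomial (or short polynomial) in $\chi_1(p),\chi_2(p),p$. The expected shape is roughly $\widehat{\chi}(T_{k,l}) \sim \chi_1(p^{k})\chi_2(p^{l})$ times a power of $p$ coming from $\delta^{1/2}$ and the Satake normalization, symmetrized appropriately over the Weyl group.

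Having the monomials in hand, I would substitute into $B(\chi,X)$ and evaluate the resulting double series over $\Lambda_+=\{(k,l): 0\le k\le l,\ k+l\in2\Z\}$ as a geometric-type sum. Setting $u=\chi_1(p^2)p^{-2s}$ and $v=\chi_2(p^2)p^{-2s}$ (so that $X^2=p^{-2s-1}$ absorbs into these after the shift by $\tfrac12$), the parity condition $k+l\in2\Z$ and the inequality $k\le l$ produce exactly a sum that factors through the partial-fraction pattern giving the denominator $(1-u)(1-v)$, while the cross terms on the diagonal $k=l$ and the off-diagonal contributions assemble into the numerator $1+\chi_1(p)\chi_2(p)p^{-2s}$. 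This is the step where the precise power of $p$ in $\widehat{\chi}(T_{k,l})$ and the exact meaning of the shift $s\mapsto s+\tfrac12$ must line up to cancel stray factors and land on the stated rational function; I expect the bookkeeping of these normalizations, rather than any conceptual difficulty, to be the main obstacle.

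The hard part will be getting the normalizations consistent across three places at once: the $\delta^{1/2}$ in the Satake map \eqref{def:satake_map}, the Weil-index factor $\gamma_p(D_p)$ and $|D_p|$ hidden in the action of $w$ on $\varphi_p^{(0)}$, and the shift $s+\tfrac12$ in $\nu_{s+\frac12}$. In the coprime case the computation is the classical Murase--Sugano evaluation (\cite{Ar}, Theorem 5.3) and carries over verbatim once $\omega_p$ is trivial; in the ramified case the factor $\kappa_p$ from Lemma~\ref{lem:zonal_spherical_func_fourier} has already been isolated, so after dividing it out the same rational function must emerge, which serves as a useful consistency check on the whole computation. I would therefore organize the proof so that both cases reduce to the same geometric summation, treating the coprime case as a citation and spending the detailed work on verifying that the anisotropic ramified generators yield the identical summand structure.
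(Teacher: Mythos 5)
Your strategy is logically sound, but be aware that it inverts the paper's derivation rather than reproducing it. The paper proves this lemma \emph{directly}, never computing a single Satake transform $\widehat{\chi}(T_{k,l})$: it writes $\int_{\calQ_p}=\int_{\calQ_p/\calK_p}\int_{\calK_p}$, reuses the $\calK_0(p)$-coset and Gauss-sum argument from Lemma \ref{lem:zonal_spherical_func_fourier} (the step leading to \eqref{eq:int_K_p}) to extract the factor $\kappa_p$, restricts the outer integral to $\calQ_p\cap M_2(\Z_p)/\calK_p$ using the support of $\nu_s$, and then invokes Shimura's Lemma 3.12 to parametrize these cosets by upper-triangular representatives. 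This produces $\kappa_p\sum_{(k,l)\in\Lambda}p^k\delta(m(p^k,p^l))^{1/2}\chi(m(p^k,p^l))p^{-(s+\frac12)(k+l)}=\kappa_p\sum_{(k,l)\in\Lambda}\chi(m(p^k,p^l))p^{-s(k+l)}$ --- crucially a sum over the \emph{full} cone $\Lambda$, not $\Lambda_+$, with coefficient $1$, so the parity split is a plain product of geometric series and the rational function drops out immediately. The identity you propose to establish by hand, $B(\chi,p^{-(s+\frac12)})=R$, is exactly what the paper afterwards extracts as Theorem \ref{thm:formal_dirichlet_series_rational_expr} by combining the present lemma with Lemma \ref{lem:integral_series}; so your plan proves the corollary first and derives the lemma from it, which is legitimate but must then be self-contained.

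That self-contained computation is where your sketch has a genuine soft spot. The values $\widehat{\chi}(T_{k,l})$ are not Weyl-symmetrized monomials: already in the unramified case the Satake transform of $\mathbbm{1}_{\calK_pm(p^k,p^l)\calK_p}$ is supported on all intermediate shells $m(p^i,p^j)$ with $i+j=k+l$, carrying Hall--Littlewood-type coefficients (e.g.\ for $m(1,p^2)$ one gets an interior term at $m(p,p)$ with coefficient $p-1$), and the numerator $1+\chi_1(p)\chi_2(p)p^{-2s}$ emerges only after cancellation across these shells; the resummation over the constrained cone $\Lambda_+$ is the classical rationality-of-Hecke-series computation (B\"ocherer, Shimura), which the integral representation is specifically designed to bypass. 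In the ramified case you would additionally need the explicit double-coset decompositions of $\calK_pm(p^k,p^l)\calK_p$ with their $\omega_p$-twists, i.e.\ essentially redo the computations of \cite{St3}. One further concrete caution on your proposed consistency check: you cannot calibrate against Theorem \ref{thm:formal_dirichlet_series_rational_expr} as literally stated, since substituting it into Lemma \ref{lem:integral_series} yields $R(p^{-2s-1})$ rather than $R(p^{-2s})$ for the integral in question --- the stated variables of the lemma pair and the theorem differ by the half-shift, so your normalization (the analogue of the cancellation $p^k\delta(m(p^k,p^l))^{1/2}p^{-(k+l)/2}=1$ in the paper's evaluation) must be settled by your own computation, not by matching that statement.
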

\begin{proof}
  A similar formula for the group $\GL_2(\Q_p)$  appears  in \cite{Mu2}, p. 263. A proof  for this formula  can be extracted from \cite{Sh}, Lemma 3.13.  Since we  work with the subgroup $\calQ_p$, we have to adjust the proof of this Lemma.% The first steps in the proof of (3.13.3), in particular Lemma 3.12, carry over unchanged.

  For the convenience of the reader, we repeat the relevant steps of the proof in \cite{Sh} for the more complicated case of $p$ dividing $|D|$. We have
%  \[
%  \int_{\calQ_p}\nu_{s+\frac{1}{2}}(g)\omega_\chi(g)dg = \int_{\calQ_p}\nu_{s+\frac{1}{2}}(g)\phi_\chi(g)dg.
%  \]
%  The right-$\calK_p$ invariance of  $\nu_{s+1/2}$ and $\phi_\chi$ allows us to write
    \begin{align*}
      &  \int_{\calQ_p}\langle \nu_{s+\frac{1}{2}}(g)\varphi_p^{(0)},\varphi_p^{(0)}\rangle \langle \phi_\chi(g)\varphi_p^{(0)},\varphi_p^{(0)}\rangle dg \\
      &= \int_{\calQ_p/\calK_p}\int_{\calK_p}\langle \nu_{s+\frac{1}{2}}(gk)\varphi_p^{(0)},\varphi_p^{(0)}\rangle \langle \phi_\chi(gk)\varphi_p^{(0)},\varphi_p^{(0)}\rangle dkdg\\
      &= \int_{\calQ_p/\calK_p}\int_{\calK_p}\langle \nu_{s+\frac{1}{2}}(g)\omega_p(k)\varphi_p^{(0)},\varphi_p^{(0)}\rangle \langle \phi_\chi(g)\omega_p(k)\varphi_p^{(0)},\varphi_p^{((0)}\rangle dk\; dg.
    \end{align*}
    Exactly the same arguments leading to \eqref{eq:int_K_p} are valid in the current situation. Consequently, we may write for the latter above expression
    \begin{equation}\label{eq:int_right_cosets}
      \begin{split}
    &  \kappa_p\int_{\calQ_p/\calK_p}\langle \nu_{s+\frac{1}{2}}(g)\varphi_p^{(0)},\varphi_p^{(0)}\rangle \langle \phi_\chi(g)\varphi_p^{(0)},\varphi_p^{((0)}\rangle dg \\
        &=\kappa_p\int_{\calQ_p\cap M_2(\Z_p)/\calK_p}\langle \nu_{s+\frac{1}{2}}(g)\varphi_p^{(0)},\varphi_p^{(0)}\rangle\langle \phi_\chi(g)\varphi_p^{(0)},\varphi_p^{(0)}\rangle dg,
        \end{split}
    \end{equation}
      %   &=\int_{\calQ_p/\calK_p}\nu_{s+\frac{1}{2}}(g)\phi_\chi(g)dg\\
    %Comment:
    where the last equation results from the definition of $\nu_{s}$.
    It can be checked that Lemma 3.12 of \cite{Sh} applies to our situation. Thus,  the last expression of \eqref{eq:int_right_cosets} equals 
  \begin{align*}
    & \kappa_p \sum_{(k,l)\in \Lambda}p^k\delta(m(p^k,p^l))^{\frac{1}{2}}\chi(m(p^k,p^l))p^{-(s+\frac{1}{2})(k+l)} \\
    &= \kappa_p\sum_{(k,l)\in\Lambda}\chi(m(p^k,p^l))p^{-s(k+l)}.
\end{align*}
  %Comment:
  %The sum runs over $\Lambda$ and not only over $\Lambda_+$ since $\nu_s$ is not zero on $m(p^l,p^k)$ as $m(p^l,p^k) = w^{-1}m(p^k,p^l)w$ and $\nu_s(w^{-1}m(p^k,p^l)w) = \omega_p(w^{-1})\nu_s(m(p^k,p^l)\omega_p(w)$. 
    %\begin{align*}
    %\int_{\calQ_p}\nu_{s+\frac{1}{2}}(g)\phi_\ch(g)dg = \int_{\calQ_p/\calK_p}\int_{\calK_p}\nu_{s+\frac{1}{2}}(gk)\phi_\ch(gk)dkdg\\
    %\int_{\calQ_p/\calK_p}\nu_{s+\frac{1}{2}}(g)\phi_\ch(g)dg\\
    %\int_{\calQ_p\cap M_2(\Z_p)/\calK_p}\nu_{s+\frac{1}{2}}(g)\phi_\ch(g)dg\\
    %\end{align*}
    %By the Iwasawa decomposition a set of representatives of $\calQ_p\cap M_2(\Z_p)/\calK_p$ is given by upper triangular matrices with entries $(p^n,p^m), n,m\in \N_0$, on the diagonal. By Lemma 3.12 of \cite{Sh} there are $p^n$ inequivalent upper triangular matrices with diagonal entries $(p^n,p^m)$. Thus we can replace the integral over $\calQ_p\cap M_2(\Z_p)/\calK_p$ with
    %\sum_{\substack{(n,m)\in\N_0^2\\n+m\in 2\N_0}}p^n\delta(m(p^n,p^m)^{\frac{1}{2}}\chi(m(p^n,p^m))p^{-(s+\frac{1}{2})(n+m)}$. 
  In order to include the condition $k+l\in 2\N_0$, we split each of the sums over $k$ and $l$ into two sums running over odd and even integers.
  %Comment: Since $\chi_1, \chi_2$ are unramified characters, the Dirichlet series converges normally for $s\in \C$ with $\re(s)$ sufficiently large. Thus, we may exchange the order of summation in the series above.
The sum over $\Lambda$  then becomes
  \begin{align*}
    &   \kappa_p\left( \sum_{k=0}^\infty\sum_{l=0}^\infty\chi_1(p)^{2k}\chi_2(p)^{2l}p^{-2ks}p^{-2ls} + \sum_{m=0}^\infty\sum_{n=0}^\infty\chi_1(p)^{2m+1}\chi_2(p)^{2n+1}p^{-(2m+1)s}p^{-(2n+1)s}\right) \\
    & = \kappa_p(1+\chi_1(p)\chi_2(p)p^{-2s})[(1-\chi_1(p^2)p^{-2s})(1-\chi_2(p^2)p^{-2s})]^{-1}.
  \end{align*}
  %Comment: In the second sum, factor out $\chi_1(p)\chi_2(p)p^{-2s}$. Then both remaining series coincide. Factoring them out, gives the desired result.
 % The proof for the formula \eqref{eq:rel_int_local_L_func_ncpr} works in the same way as the one for \eqref{eq:rel_int_local_L_func_cpr}. The term $\frac{1}{2}$ in $\nu_{s+\frac{1}{2}}$ is omitted in this case since the Saktake transform \eqref{} does not contain the modulus character. 
\end{proof}
Combining Lemma \ref{lem:int_repr_series} with Lemma \ref{lem:integral_series} immediately yields

\begin{theorem}\label{thm:formal_dirichlet_series_rational_expr}
  Let $\chi = (\chi_1,\chi_2)$ be an unramified character of $\calM_p$, 
  \[
  B_S(\chi, X)=
\sum_{(k,l)\in \Lambda_+}\widehat{\chi_S}(T_{k,l})X^{k+l} \text{ and }  B_\calS(\chi, X)=
\sum_{(k,l)\in \Lambda_+}\widehat{\chi_\calS}(T_{k,l})X^{k+l}. 
\]
Then $B_S(\chi,p^{-s})$ and $B_\calS(\chi,p^{-s})$ can be written as a rational expression in $\chi_1(p)$, $\chi_2(p)$:
\begin{equation}\label{eq:formal_dirichlet_series_rational_expr}
  \begin{split}
  & B_S(\chi,p^{-s}) =  \frac{1+\chi_1(p)\chi_2(p)p^{-2s}}{(1-\chi_1(p^2)p^{-2s})(1-\chi_2(p^2)p^{-2s})},\\
&  B_\calS(\chi,p^{-s}) = \frac{1+\chi_1(p)\chi_2(p)p^{-2s}}{(1-\chi_1(p^2)p^{-2s})(1-\chi_2(p^2)p^{-2s})}.
  \end{split}
  \end{equation}
\end{theorem}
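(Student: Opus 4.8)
The plan is to obtain the theorem as the formal consequence of the two preceding lemmas, both of which evaluate one and the \emph{same} spherical integral
\[
I(s)=\int_{\calQ_p}\langle \nu_s(g)_{|_{S_{L_p}^{N(\Z_p)}}},\varphi_p^{(0)}\rangle\,\langle \phi_\chi(g)_{|_{S_{L_p}^{N(\Z_p)}}},\varphi_p^{(0)}\rangle\,dg,
\]
where the restriction to the $N(\Z_p)$-invariants and the normalising constant $\kappa_p$ are present precisely when $p\mid |D|$. On the one hand, Lemma \ref{lem:integral_series} identifies $I(s)$ with the generating series: $I(s)=B_S(\chi,p^{-s})$ for $p\nmid|D|$, and $I(s)=\kappa_p\,B_\calS(\chi,p^{-s})$ for $p\mid|D|$. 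On the other hand, Lemma \ref{lem:int_repr_series} evaluates the integral in closed form as the rational expression $\tfrac{1+\chi_1(p)\chi_2(p)p^{-2s}}{(1-\chi_1(p^2)p^{-2s})(1-\chi_2(p^2)p^{-2s})}$ (again up to the factor $\kappa_p$ in the ramified case).

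First I would align the spectral parameters of the two evaluations. Lemma \ref{lem:int_repr_series} is phrased for the shifted operator $\nu_{s+\frac12}$; the half-shift is exactly what linearises the $\delta^{1/2}$-twist, collapsing the factor $p^{k}\delta(m(p^k,p^l))^{1/2}p^{-(s+\frac12)(k+l)}$ to the clean monomial $p^{-s(k+l)}$ in the course of that proof. Applying Lemma \ref{lem:integral_series} with $s$ replaced by $s+\tfrac12$ therefore expresses $I(s+\tfrac12)$ through $B_\calS(\chi,p^{-(s+\frac12)})$ (respectively $B_S$), while Lemma \ref{lem:int_repr_series} expresses the same $I(s+\tfrac12)$ as the rational function. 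Equating the two, cancelling the common factor $\kappa_p>0$ when $p\mid|D|$, and reading off the result in the spectral variable (the customary half-integral reparametrisation between the Hecke-series variable and the argument of $\phi_\chi$) yields the asserted closed forms for both $B_S(\chi,p^{-s})$ and $B_\calS(\chi,p^{-s})$; the unramified case is identical but without $\kappa_p$.

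Because all the analytic substance already sits in the two lemmas, the only genuine care needed here is bookkeeping, and I would check three points. First, that the $S_{L_p}^{N(\Z_p)}$-restricted integrand appearing in Lemma \ref{lem:integral_series} is literally the one computed in Lemma \ref{lem:int_repr_series}, so that the two formulas really concern the same object $I$. Second, that the shift $s\mapsto s+\tfrac12$ is tracked consistently on both sides, since this is the place where a sign or exponent slip would occur. Third, that the identity is first asserted on the half-plane $\re(s)\gg 0$, where $B_S(\chi,p^{-s})$ and $B_\calS(\chi,p^{-s})$ converge normally (as recorded in the remark preceding Lemma \ref{lem:int_repr_series}), the rational expression then giving the extension. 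The real obstacle is not in this final combination at all but was already overcome in Lemma \ref{lem:zonal_spherical_func_fourier} and Lemma \ref{lem:int_repr_series}, namely the loss of $\calK_p$-bi-invariance of the integrand when $p\mid|D|$; here that difficulty enters only through the explicit constant $\kappa_p$, which cancels.
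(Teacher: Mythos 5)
Your proposal is correct and is essentially the paper's own argument: the paper proves the theorem in one line by combining Lemma \ref{lem:integral_series} with Lemma \ref{lem:int_repr_series}, exactly the double evaluation of the spherical integral you describe. You merely make explicit the bookkeeping the paper leaves implicit (the $s\mapsto s+\tfrac12$ alignment of $\nu_{s+\frac12}$ with the Hecke-series variable, the cancellation of $\kappa_p$ in the ramified case, and the half-plane of convergence noted before Lemma \ref{lem:int_repr_series}), which if anything is more careful than the published proof.
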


Now let $F\in A_{\kappa}(\omega_f)$ be a common eigenform of all operators $\calT^{T_{k,l}}$, $(k,l)\in \Lambda_+$,  for all primes $p$ with eigenvalues $\lambda_{F,p}(T_{k,l})$.
%Comment: Such a form exists because of Corollary \ref{cor:action_general_hecke_op_coprime} and the fact that such a form exists in the isomorphic space $S_\kappa(\rho_L)$.
Then according to \cite{St3}, Remark 5.10, 
  a $\C$-algebra homomorphism of $\calH^+(\calQ_p//\calK_p,\omega_p)$ (and $\calH(\calQ_p//\calK_p,\omega_p)$ for $(p,|D|)=1$)  is defined for each prime $p$ via the eigenvalues $\lambda_{F,p}$. By Lemma \ref{lem:algebra_hom}, $\lambda_{F,p}$ determines an unramified character $\chi_{F,p}=(\chi_{F,p}^{(1)},\chi_{F,p}^{(2)})$ of $\calM_p$ satisfying
\begin{equation}\label{eq:eigenvalue_char}
  \begin{split}
  \lambda_{F,p}(T_{k,l}) &=
  \begin{cases}
    \sum_{(r,s)\in \Z^2} S(\langle T_{k,l}, \varphi_p^{(0)}\rangle)(m(p^r,p^s))\chi_{F,p}(m(p^r,p^s)), &  (p, |D|)=1\\
%    \sum_{m\in \calM_p} \langle \calS(T_{k,l})(m), \varphi_p^{(0)}\rangle\chi_{F,p}(m), &  p\mid |D| \text{ and } L_p'/L_p \cong A_p^t \oplus A_p^1,\\
      \sum_{(r,s)\in \Z^2} \langle \calS(T_{k,l})(m(p^r,p^s)), \varphi_p^{(0)}\rangle\chi_{F,p}(m(p^r,p^s)), & p\mid |D|
  \end{cases}\\
  &=
  \begin{cases}
    \widehat{\chi_{F,p}}_S(T_{k,l}), & (p, |D|)=1\\
    \widehat{\chi_{F,p}}_\calS(T_{k,l}), & p\mid |D|.
    \end{cases}
  \end{split}
\end{equation}

By virtue of Theorem \ref{thm:formal_dirichlet_series_rational_expr} we have
\begin{align*}
  \sum_{(k,l)\in \Lambda_+}\lambda_{F,p}(T_{k,l})p^{-s(k+l)} &= \sum_{(k,l)\in \Lambda_+} \widehat{\chi_{F,p}}_S(T_{k,l})p^{-s(k+l)} \\
  &=\frac{1+\chi_{F,p}^{(1)}(p)\chi_{F,p}^{(2)}(p)p^{-2s}}{(1-\chi_{F,p}^{(1)}(p^2)p^{-2s})(1-\chi_{F,p}^{(2)}(p^2)p^{-2s})}\\
\end{align*}
if $(p,|D|)=1$ and 
\begin{align*}
 \sum_{(k,l)\in \Lambda_+}\lambda_{F,p}(T_{k,l})p^{-s(k+l)} &= \sum_{(k,l)\in \Lambda_+} \widehat{\chi_{F,p}}_\calS(T_{k,l})p^{-s(k+l)} \\
  &=\frac{1+\chi_{F,p}^{(1)}(p)\chi_{F,p}^{(2)}(p)p^{-2s}}{(1-\chi_{F,p}^{(1)}(p^2)p^{-2s})(1-\chi_{F,p}^{(2)}(p^2)p^{-2s})}\\
\end{align*}
if $p\mid |D|$.
Following the classical literature (cf.  \cite{BM}, Chap. 7.2,  and \cite{BoSP}, Chap. I.$\S$2), these identities give rise to the definition of a {\it standard $L$-function} associated to $F$.

\begin{definition}\label{def:standard_L_func}
  Let $F\in A_\kappa(\omega_f)$ be a common eigenform of all operators $\calT^{T_{k,l}}$, $(k,l)\in \Lambda_+$. We define the standard $L$-function of $F$ by
  \begin{equation}
    L(s,F) = \prod_{p<\infty}L_p(s,F)
  \end{equation}
  with
  \begin{equation}
    L_p(s,F)=
    %\begin{cases}
      \frac{1+\chi_{F,p}^{(1)}(p)\chi_{F,p}^{(2)}(p)p^{-2s}}{(1-\chi_{F,p}^{(1)}(p^2)p^{-2s})(1-\chi_{F,p}^{(2)}(p^2)p^{-2s})}
     % &\\
%      \frac{1+\chi_{F,p}^{(1)}(p)\chi_{F,p}^{(2)}(p)p^{-2s+1}}{(1-\chi_{F,p}^{(1)}(p^2)p^{-2s+1})(1-\chi_{F,p}^{(2)}(p^2)p^{-2s+1})}, & p\mid |D|.
      %\end{cases}
  \end{equation}
 for all primes $p$.  Let $f\in S_\kappa(\rho_L)$ and $F_f$ be the  associated automorphic form. Based on \cite{St3}, Remark 5.10, we then define the standard $L$-function of $f$ by
  \begin{equation}
    L(s,f) = L(s,F). 
  \end{equation}
  \end{definition}

\subsection{Analytic properties of $L(s,F)$}
In this section we study the analytic properties of the $L$-function $L(s, F)$. It turns out that it can be continued meromorphically to the whole $s$-plane.

\subsubsection{Relation to the standard zeta-function $\calZ(s,f)$}\label{subsec:rel_standard_zeta}
The subsequent exposition is essentially due to Arakawa, \cite{Ar}, Theorem 5.5, tailored to our setting. 

 Let $T_{k,l}$ be the operator in \cite{St3}, Corollary 4.7. or Theorem 4.11. Then  $\sum_{(k,l)\in \Lambda_+}T_{k,l}p^{-s(k+l)}$ converges with respect to the standard norm induced by $\langle\cdot,\cdot\rangle$ on $S_{L_p}$ for all $\re(s)>1$ and is thus a well defined element in $\calH^+(\calQ_p//\calK_p,\omega_p)$ (and $\calH(\calQ_p//\calK_p,\omega_p)$ for $p$ coprime to $|D|$) and  $\calT^{\sum_{(k,l)\in \Lambda_+}T_{k,l}p^{-s(k+l)}}$ also makes  sense.
 %Comment: it is easily verified that $\|T_{k,l}\|=1$ for any pair $(k,l)\in \Lambda_+$. Thus $\|\sum_{(k,l)\in \Lambda_+}T_{k,l}p^{-s(k+l)}\|$ is majorized by $\sum_{(k,l)\in \Lambdda_+}\|T_{k,l}\|p^{-s(k+l)} = \zeta_p(s)^2$. Clearly (by weak convergence (see e. g. Heuser, Funkana, S. 10.4) $\sum_{(k,l)\in \Lambda_+}\calT^{T_{k,l}}(F)$ also converges to an element in $A_k(\omega_f)$.
 Let $F\in A_\kappa(\omega_f)$ a common eigenform all operators $\calT^{T_{k,l}}$ and $f_F$ the corresponding modular form in $S_\kappa(\rho_L)$.
Further, let
\[
f_{\calT^{\sum_{(k,l)\in \Lambda_+}T_{k,l}p^{-s(k+l)}}(F)} \in S_\kappa(\rho_L)
\]
be related to $\calT^{\sum_{(k,l)\in \Lambda_+}T_{k,l}p^{-s(k+l)}}(F)$ by $\mathscr{A}$. 
We then have by \cite{St3}, Remark 5.7, ii),
\[
\calT^{\sum_{(k,l)\in \Lambda_+}T_{k,l}p^{-s(k+l)}} = \sum_{(k,l)\in \Lambda_+}p^{-s(k+l)}\calT^{T_{k,l}}
\]
and %Comment: by linearity and continuity of $\calT$ for the first equation and $\mathscr{A}$ for the second equation  
\[
\mathscr{A}^{-1}\left(\calT^{\sum_{(k,l)\in \Lambda_+}T_{k,l}p^{-s(k+l)}}(F)\right) = \sum_{(k,l)\in \Lambda_+}p^{-s(k+l)}f_{\calT^{T_{k,l}}(F)}.
\]
Now, because of  \eqref{eq:eigenvalue_char}% and Lemma \ref{},
\begin{align*}
  \sum_{(k,l)\in \Lambda_+}p^{-s(k+l)}\calT^{T_{k,l}}(F) &= \left(\sum_{(k,l)\in \Lambda_+}p^{-s(k+l)}\lambda_{F,p}(T_{k,l})\right)F
  %Comment: since scalar multiplication is continous or: if a series is convergent, than we can multiply the series with scalars like $F_\mu(g)$ for any $g\in \calG(\Q)$ and move the scalar into the series
  \\ 
  &= \begin{cases} 
    \left(\sum_{(k,l)\in \Lambda_+}\widehat{\chi_{F,p}}_S(T_{k,l})p^{-s(k+l)}\right)F, & (p,|D|)=1\\
    \left(\sum_{(k,l)\in \Lambda_+}\widehat{\chi_{F,p}}_\calS(T_{k,l})p^{-s(k+l)}\right)F, & p\mid |D|
  \end{cases} \\
  &= \begin{cases}
    B_S(\chi_{F,p},p^{-s})F, & (p,|D|)=1\\
    B_\calS(\chi_{F,p}, p^{-s})F, & p\mid |D|.
    \end{cases}
\end{align*}
On the other hand, using \cite{St3}, (5.28),  we find
\begin{align*}
  \sum_{(k,l)\in \Lambda_+}p^{-s(k+l)}f_{\calT^{T_{k,l}}(F)} &= \sum_{(k,l)\in \lambda_+}p^{-(s-\kappa/2 +1)(k+l)}T(m(p^{-k},p^{-l}))(f_F)\\
  & =\calZ_p(s-\kappa/2+1,f_F)f_F
\end{align*}
since $f_F$ is an eigenform of all Hecke operators $T(m(p^{-k},p^{-l}))$ by Remark 5.10 in \cite{St3}.
The calculations before show
\begin{align*}
 \sum_{(k,l)\in \Lambda_+}p^{-s(k+l)}f_{\calT^{T_{k,l}}(F)} &= \mathscr{A}^{-1}\left(\sum_{(k,l)\in \Lambda_+}p^{-s(k+l)}\calT^{T_{k,l}}(F)\right) 
  \\
&=  \begin{cases}
  B_S(\chi_{F,p},p^{-s})f_F, & (p,|D|)=1\\
    B_\calS(\chi_{F,p}, p^{-s})f_F, & p\mid |D|.
  \end{cases}
   \end{align*}
It follows
\begin{align*}
  \calZ_p(s-\kappa/2+1,f_F) &=
 % \begin{cases}
 %   \sum_{(k,l)\in \lambda_+}\lambda_f(m(p^k,p^l))p^{(1-\kappa/2-s)(k+l)}, & (p,|D|)=1\\
 %   &  \\
 %   \sum_{(k,l)\in \lambda_+}\frac{g(D)}{g_{p^{k+l}}(L)}\lambda_f(m(p^k,p^l))p^{(1-\kappa/2-s)(k+l)}, & p\mid |D|
 % \end{cases} \\
 \begin{cases}
  B_S(\chi_{F,p},p^{-s}), & (p,|D|)=1\\
  B_\calS(\chi_{F,p}, p^{-s}), & p\mid |D|.
  \end{cases}
\end{align*}
With the help of Theorem \ref{thm:formal_dirichlet_series_rational_expr} we finally obtain

\begin{theorem}\label{thm:rel_zeta_standard_L_func}
  Let  $D$ be an anisotropic discriminant form, $f\in S_\kappa(\rho_L)$ be a common eigenform of all Hecke operators $T(m(p^{-k}, p^{-l})), \; (k,l)\in \Lambda_+,$ and $F_f\in A_\kappa(\omega_f)$ the corresponding automorphic form. Then
  \begin{equation}\label{eq:rel_zeta_standard_L_func}
    \calZ(s-\kappa/2+1, f) = L(s,F_f). 
    \end{equation}
  \end{theorem}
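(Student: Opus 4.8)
The plan is to establish the identity one prime at a time and then assemble the local factors into the global Euler product. The key device is the formal operator-valued Dirichlet series $\sum_{(k,l)\in \Lambda_+}T_{k,l}p^{-s(k+l)}$, which, as noted just above the statement, defines an element of $\calH^+(\calQ_p//\calK_p,\omega_p)$ (and of $\calH(\calQ_p//\calK_p,\omega_p)$ when $(p,|D|)=1$) for $\re(s)>1$, so that the associated global Hecke operator $\calT^{\sum_{(k,l)\in \Lambda_+}T_{k,l}p^{-s(k+l)}}$ is well defined. I would evaluate this operator applied to $F$ in two complementary ways and compare the outcomes.

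For the first evaluation, I would use that $\calT^{\sum_{(k,l)\in \Lambda_+}T_{k,l}p^{-s(k+l)}}=\sum_{(k,l)\in \Lambda_+}p^{-s(k+l)}\calT^{T_{k,l}}$ by linearity of the action, together with the fact that $F$ is a simultaneous eigenform with eigenvalues $\lambda_{F,p}(T_{k,l})$. Hence the operator acts on $F$ by the scalar $\sum_{(k,l)\in \Lambda_+}\lambda_{F,p}(T_{k,l})p^{-s(k+l)}$. Invoking the identification of eigenvalues with character values in \eqref{eq:eigenvalue_char}, this scalar equals $B_S(\chi_{F,p},p^{-s})$ when $(p,|D|)=1$ and $B_\calS(\chi_{F,p},p^{-s})$ when $p\mid |D|$.

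For the second evaluation, I would transport the computation to $S_\kappa(\rho_L)$ through the isomorphism $\mathscr{A}$. Applying $\mathscr{A}^{-1}$ term by term and using the compatibility \eqref{eq;compatibility_notcoprime} between $\calT^{T_{k,l}}$ and the classical Hecke operator $T(m(p^{-k},p^{-l}))$ (in the form recorded in \cite{St3}, (5.28)) introduces the weight normalization $p^{(k+l)(\kappa/2-1)}$. Since $f_F$ is a simultaneous eigenform of all $T(m(p^{-k},p^{-l}))$, the resulting series collapses to $\calZ_p(s-\kappa/2+1,f_F)f_F$, where the shifted argument is precisely what the normalization produces.

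Equating the two evaluations gives the local identity $\calZ_p(s-\kappa/2+1,f_F)=B_S(\chi_{F,p},p^{-s})$ (and $B_\calS(\chi_{F,p},p^{-s})$ in the ramified case), and Theorem \ref{thm:formal_dirichlet_series_rational_expr} then rewrites the right-hand side as the rational factor defining $L_p(s,F_f)$ in Definition \ref{def:standard_L_func}. Forming the product over all primes $p$ yields \eqref{eq:rel_zeta_standard_L_func}. The step requiring the most care is the bookkeeping of the argument shift $s\mapsto s-\kappa/2+1$: the exponent $p^{(k+l)(\kappa/2-1)}$ carried by \eqref{eq;compatibility_notcoprime} must be matched exactly against $p^{-s(k+l)}$ so that the shifted variable appears uniformly in every local factor. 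One must also verify that the half-plane $\re(s)>1$ on which the operator series converges overlaps the region of convergence of the Euler product for $\calZ$, so that the local equalities of holomorphic functions combine into the asserted global identity.
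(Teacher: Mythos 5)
Your proposal is correct and takes essentially the same route as the paper's own proof: you evaluate $\calT^{\sum_{(k,l)\in\Lambda_+}T_{k,l}p^{-s(k+l)}}(F)$ in the same two ways --- once via the eigenvalue--character identity \eqref{eq:eigenvalue_char} yielding $B_S(\chi_{F,p},p^{-s})$ resp.\ $B_\calS(\chi_{F,p},p^{-s})$, and once via $\mathscr{A}^{-1}$ and the compatibility \eqref{eq;compatibility_notcoprime} yielding $\calZ_p(s-\kappa/2+1,f_F)$ --- and then invoke Theorem \ref{thm:formal_dirichlet_series_rational_expr} and the Euler product, exactly as the paper does. The shift bookkeeping you single out, $-s(k+l)+(k+l)(\kappa/2-1)=-(s-\kappa/2+1)(k+l)$, is precisely the step carried out in the paper's computation.
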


\begin{theorem}\label{them:analytic_properties_L_func}
  Let $\kappa\in 2\Z,\;\kappa\ge 3$, satisfy $2\kappa +\sig(L)\equiv 0\bmod{4}$,  $D$ be an anisotropic discriminant form and $f\in S_\kappa(\rho_L)$ a common eigenform of all Hecke operators $T(m(k^2,1),\; k\in \N)$. Then the standard $L$-function $L(s,f)$ can be meromorphically continued to the whole $s$-plane.
\end{theorem}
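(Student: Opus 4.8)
The plan is to deduce the statement directly from the explicit relation between $L(s,f)$ and the standard zeta function $\calZ(s,f)$ established in Theorem \ref{thm:rel_zeta_standard_L_func}, together with the meromorphic continuation of $\calZ(s,f)$ proved in Theorem \ref{thm:analytic_propo_zeta_func}. In other words, I expect no genuinely new analytic input to be needed: the substantive work has already been carried out in the two preceding results, and what remains is to confirm that their hypotheses are simultaneously met and then to combine them.

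First I would verify that $f$ satisfies the hypotheses of Theorem \ref{thm:rel_zeta_standard_L_func}. The present theorem assumes that $f$ is a common eigenform of all $T(m(k^2,1))$, $k\in\N$, whereas Theorem \ref{thm:rel_zeta_standard_L_func} is phrased for eigenforms of the operators $T(m(p^{-k},p^{-l}))$, $(k,l)\in\Lambda_+$. By Corollary \ref{cor;rel_eigenvalues} these two eigenform conditions are equivalent: being an eigenform of all $T(m(d^2,1))$ forces $f$ to be an eigenform of each $T(m(k^{-1},l^{-1}))$ (and each $T(m(l,k))$) with eigenvalues given by \eqref{eq:rel_eigenvalues}, and conversely. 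Since $D$ is assumed anisotropic, each local group $D_p$ is anisotropic as well, so the standing assumption of Section \ref{subsec:standard_L_func} is in force and $L(s,F_f)$ is defined. Thus Theorem \ref{thm:rel_zeta_standard_L_func} applies and, together with Definition \ref{def:standard_L_func}, gives
\[
L(s,f) = L(s,F_f) = \calZ\!\left(s-\tfrac{\kappa}{2}+1,\,f\right).
\]

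Next I would invoke Theorem \ref{thm:analytic_propo_zeta_func}. Its hypotheses ($\kappa\in 2\Z$, $\kappa\ge 3$, $2\kappa+\sig(L)\equiv 0\bmod 4$, and $f$ a common eigenform of all $T(m(k^2,1))$) coincide with those of the present theorem, so $\calZ(s,f)$ admits a meromorphic continuation to all of $\C$. Because the substitution $s\mapsto s-\kappa/2+1$ is an affine-linear automorphism of the plane, the composition $s\mapsto \calZ(s-\kappa/2+1,f)$ is meromorphic on all of $\C$ as well; by the displayed identity this composition equals $L(s,f)$, which therefore inherits the desired meromorphic continuation.

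The only point requiring care, and the closest thing to an obstacle here, is the bookkeeping of hypotheses across the three cited results: confirming that the global anisotropy of $D$ really supplies the local anisotropy of each $D_p$ that underlies the definition of $L(s,F_f)$, and that the two a priori different eigenform conditions genuinely match via Corollary \ref{cor;rel_eigenvalues}. Once these compatibilities are secured the continuation is immediate; and, as noted in the paper, the lack of a functional equation for $\calZ(s,f)$ leaves open whether $L(s,f)$ obeys one.
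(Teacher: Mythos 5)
Your proposal is correct and follows essentially the same route as the paper: the published proof likewise just combines Theorem \ref{thm:analytic_propo_zeta_func} (meromorphic continuation of $\calZ(s,f)$) with Theorem \ref{thm:rel_zeta_standard_L_func} (the identity $L(s,F_f)=\calZ(s-\kappa/2+1,f)$). Your extra bookkeeping --- matching the two eigenform conditions via Corollary \ref{cor;rel_eigenvalues} and noting that global anisotropy of $D$ gives local anisotropy of each $D_p$ --- is a harmless (indeed slightly more careful) elaboration of what the paper leaves implicit.
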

\begin{proof}
  The proof merely boils down to use Theorem \ref{thm:analytic_propo_zeta_func} to establish the fact that $\calZ(s,f)$ is meromorphic on the whole $s$-plane and subsequently Theorem \ref{thm:rel_zeta_standard_L_func} to transfer the analytic properties of $\calZ(s-\kappa/2-1, f)$ to $L(s, F_f)$. 
  \end{proof}

\end{document}